\documentclass[sn-mathphys,Numbered]{sn-jnl}
\usepackage{graphicx}
\usepackage{multirow}
\usepackage{amsmath,amssymb,amsfonts}
\usepackage{amsthm}
\usepackage{mathrsfs}
\usepackage[title]{appendix}
\usepackage{xcolor}
\usepackage{textcomp}
\usepackage{manyfoot}
\usepackage{booktabs}
\usepackage{algorithm}
\usepackage{algorithmicx}
\usepackage{algpseudocode}
\usepackage{listings}
\usepackage{hyperref}
\usepackage{natbib}
\theoremstyle{thmstyleone}
\newtheorem{theorem}{Theorem}
\newtheorem{proposition}[theorem]{Proposition}
\theoremstyle{thmstyletwo}

\newtheorem{remark}{Remark}
\newtheorem{corollary}[theorem]{Corollary}
\theoremstyle{thmstylethree}

\newtheorem{conjecture}[theorem]{Conjecture}

\begin{document}

\title[Permutation Polynomials over $\mathbb{Z}_{n}$ and  T-quaternion Rings ]{Permutation Polynomials over $\mathbb{Z}_{n}$ and T-quaternion Rings }

\author*[1]{\fnm{A.} \sur{Telveenus}}\email{telveenusa@gmail.com}

\affil*[1]{\orgdiv{Former Lecturer, International Study Centre}, \orgname{Kingston University}, \orgaddress{\street{Kingston Hill Campus}, \city{London}, \postcode{KT2 7LB},  \country{UK}}  \orgdiv{and Former HoD Mathematics, Fatima Mata National College}, \orgname{University of Kerala}, \orgaddress{\street{Kollam}, \city{Kerala},  \country{India}} }

\abstract{T-quaternion rings are a special class of commutative unital subrings of the quaternion rings over commutative rings. Permutation Polynomials (PP) are typically defined over finite fields, such as $\mathbb{F}_{q}$,  where $q$ is a prime power. This article examines permutation polynomials over finite T-quaternion rings, focusing specifically on linear, quadratic, and cubic cases. The study begins with permutation polynomials over the rings $\mathbb{Z}_{n}$, before extending the discussion to T-quaternion rings.}

\keywords{Quaternions, Norm, Conjugate, Primitive $m^{th}$ roots of unity,  halidon rings, unit group.}

\pacs[MSC Classification]{16S34, 20C05, 15B33}  

\maketitle
\section{Introduction}
William Rowan Hamilton was an Irish mathematician and physicist best known for his contributions to linear algebra, including the Cayley–Hamilton theorem. Although he is widely associated with the Cayley–Hamilton theorem, Hamilton is also remembered for inventing quaternions over the real numbers. This discovery became a landmark moment in the history of algebra. Hamilton’s insight into quaternions is famously commemorated on a bridge in Dublin, along a route he used to walk to his university. \\ Let $\mathbb{R}$ be the field of real numbers and $\mathbb{H}=\{ q=w+xi+yj+zk \ | w,x,y,z\in \mathbb{R}\}$. The addition is defined as component wise and the multiplication in $\mathbb{H}$ is defined by the following rules:\begin{enumerate}
                                 \item  $i^{2}=j^{2}=k^{2}=-1;$
                                 \item $ ij=k, \ jk=i, \ ki=j \ ;$
                                 \item $ ji=-k, \ kj=-i, \ ik=-j.  $ (\cite{h}, Book II, Chapter I,page 158)
                               \end{enumerate}
Clearly $\mathbb{H}$ is division ring over $\mathbb{R}$. $w$ and $xi+yj+zk$ are called the \textit{real} and \textit{imaginary} parts of $q$ or \textit{scalar quaternion} and \textit{vector quaternion} respectively. The rule 2 can be interpreted as $i \rightarrow j \rightarrow k \rightarrow i$ in a positive orientation(anticlockwise orientation). When the orientation is opposite, we get negative results as in rule 3. According to the above rules, the multiplication (Hamilton) of two elements $q_{1}=w_{1}+x_{1}i+y_{1}j+z_{1}k$ and $q_{2}=w_{2}+x_{2}i+y_{2}j+z_{2}k$ is given by 
$q_{1}q_{2}=(w_{1}w_{2}-x_{}x_{2}-y_{1}y_{2}-z_{1}z_{2})+(w_{1}x_{2}+w_{2}x_{1}+y_{1}z_{2}-z_{1}y_{2})i+(w_{1}y_{2}+w_{2}y_{1}-x_{1}z_{2}+z_{1}x_{2})j+(w_{1}z_{2}+z_{1}w_{2}+x_{1}y_{2}-x_{2}y_{1})k$
$q^{*}=w-xi-yj-zk$ is defined as the conjugate of $q=w+xi+yj+zk$ and the norm of $q$,$\mid\mid q \mid\mid$, is defined by the equation $\mid\mid q \mid\mid^{2}=qq^{*}=w^{2}+x^{2}+y^{2}+z^{2}$. If 
$\mid\mid q \mid\mid \neq 0$, then $q^{-1}=\dfrac{1}{w^{2}+x^{2}+y^{2}+z^{2}}(w-xi-yj-zk)$. If $q=w+xi+yj+zk=w+u$ with $\mid\mid u \mid\mid=1$, then $u^{2}=-1$ and thereby $u^{4}=1$. 
\section{Quaternions over commutative rings}
Let $\mathbf{R}$ be a commutative unital ring and let $U(\mathbf{R})$ be the unit group of $\mathbf{R}$. Let $NU(\mathbf{R})$ be the non-unit elements of $\mathbf{R}$. 
We define the ring of quaternions over $\mathbf{R}$ as $\mathcal{H_{\mathbf{R}}}=\{ q=w+xi+yj+zk \ | w,x,y,z\in \mathbf{R}\}$ and the rules of addition and multiplication remain the same as in the standard ring of quaternions. There are no changes in the definitions of conjugate and norm. \\
We state the following propostions and theorems without proof and for details refer to \cite{ath3} \\
\begin{proposition}
An element $q$ in $\mathcal{H_{\mathbf{R}}}$ is invertible in $\mathcal{H_{\mathbf{R}}}$ if and only if $\mid\mid q \mid\mid^{2} \in U(\mathbf{R})$. 
\end{proposition}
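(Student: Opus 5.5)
The plan is to use the conjugate exactly as in the real case. The key fact is that $N(q):=\mid\mid q \mid\mid^{2}=w^{2}+x^{2}+y^{2}+z^{2}$ lies in $\mathbf{R}$, which is central in $\mathcal{H_{\mathbf{R}}}$. First I will verify, directly from the multiplication formula, that $qq^{*}=q^{*}q=N(q)$. This uses only commutativity of $\mathbf{R}$: the cross terms in the $i,j,k$ components cancel pairwise. Then I will establish multiplicativity of the norm, $N(q_{1}q_{2})=N(q_{1})N(q_{2})$. I intend to derive it from the anti-automorphism property $(q_{1}q_{2})^{*}=q_{2}^{*}q_{1}^{*}$, which is checked on the basis elements $i,j,k$ and extended bilinearly. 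Then
\[
N(q_{1}q_{2})=q_{1}q_{2}q_{2}^{*}q_{1}^{*}=q_{1}N(q_{2})q_{1}^{*}=N(q_{2})q_{1}q_{1}^{*}=N(q_{1})N(q_{2}),
\]
using centrality of $N(q_{2})\in\mathbf{R}$. Alternatively, this is just Euler's four-square identity, a polynomial identity that holds over any commutative ring.

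For sufficiency, suppose $N(q)\in U(\mathbf{R})$. I will set $q^{-1}=N(q)^{-1}q^{*}$. Because $N(q)^{-1}$ is a scalar that commutes with everything, $qq^{-1}=N(q)^{-1}qq^{*}=1$, and likewise $q^{-1}q=1$. So $q$ is invertible.

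For necessity, suppose $qp=pq=1$ for some $p\in\mathcal{H_{\mathbf{R}}}$. Applying multiplicativity gives $N(q)N(p)=N(1)=1$ in $\mathbf{R}$, so $N(q)\in U(\mathbf{R})$. I expect the only real obstacle to be carefully verifying $(q_{1}q_{2})^{*}=q_{2}^{*}q_{1}^{*}$ over a general commutative ring, since the rest is formal. This identity is still routine: it is linear in each argument, so checking products of basis elements such as $(ij)^{*}=-k=j^{*}i^{*}$ suffices. No division or characteristic assumptions enter, so the argument works even when $2$ is not a unit.
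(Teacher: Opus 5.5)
Your proof is correct. Over any commutative $\mathbf{R}$, you have $qq^{*}=q^{*}q=\|q\|^{2}$. The identity $(q_{1}q_{2})^{*}=q_{2}^{*}q_{1}^{*}$ holds by bilinearity and a check on $1,i,j,k$. From these, multiplicativity of the norm gives necessity, and $\|q\|^{-2}q^{*}$ gives sufficiency, with no assumption on $2$ or the characteristic.

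There is no proof in the paper to compare with. The proposition is stated without proof, with a pointer to an earlier reference. The only related argument the paper contains is the formula $q^{-1}=\|q\|^{-2}q^{*}$ in the introduction, given for real quaternions; that is exactly your sufficiency direction. Your necessity argument via $N(qp)=N(q)N(p)$ supplies the half the paper leaves out.

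Two small remarks. First, the regrouping $q_{1}(q_{2}q_{2}^{*})q_{1}^{*}$ uses associativity of $\mathcal{H}_{\mathbf{R}}$. That is legitimate because $\mathcal{H}_{\mathbf{R}}$ is a ring (it is the scalar extension of the integral quaternions), but it is worth saying. Second, your necessity step only uses a one-sided relation $qp=1$. So it also shows that one-sided invertibility already forces $\|q\|^{2}\in U(\mathbf{R})$, and hence two-sided invertibility.
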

\begin{theorem}
Let $\mathcal{H_{\mathbf{R}}}=\{ q=w+xi+yj+zk \ | w,x,y,z\in \mathbf{R}\}$ be the ring of quaternions over $\mathbf{R}$. Then there exists an induced ring of quaternions $\mathcal{H_{\mathbf{R}}^{\perp}}$ with orientation just opposite to $\mathcal{H_{\mathbf{R}}}$ such that $\mathcal{H}_{\mathbf{R}}\cong\mathcal{H_{\mathbf{R}}^{\perp}}$. 
\end{theorem}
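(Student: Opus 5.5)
The plan is to define $\mathcal{H}_{\mathbf{R}}^{\perp}$ explicitly and then write down an isomorphism by hand. On the same underlying $\mathbf{R}$-module $\{w+xi+yj+zk \mid w,x,y,z\in\mathbf{R}\}$, with componentwise addition, I would put the multiplication with reversed orientation:
\[
i^{2}=j^{2}=k^{2}=-1,\qquad ji=k,\ kj=i,\ ik=j,\qquad ij=-k,\ jk=-i,\ ki=-j .
\]
This is the cycle $i\to k\to j\to i$ in place of $i\to j\to k\to i$, with scalars in $\mathbf{R}$ central and multiplication extended bilinearly. Equivalently, $q_{1}\cdot_{\perp} q_{2}$ is the Hamilton product formula from the introduction with the sign of each cross-product term ($y_{1}z_{2}-z_{1}y_{2}$ and so on) reversed. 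In other words, $q_{1}\cdot_{\perp}q_{2}=q_{2}q_{1}$, so $\mathcal{H}_{\mathbf{R}}^{\perp}$ is just the opposite ring $\mathcal{H}_{\mathbf{R}}^{op}$. This is how I would \emph{induce} it from $\mathcal{H}_{\mathbf{R}}$: the ring axioms (associativity, distributivity, unit $1$) are then automatic, since the opposite of a ring is a ring. This avoids checking associativity by hand over an arbitrary commutative $\mathbf{R}$.

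For the isomorphism I would take the conjugation map $\phi:\mathcal{H}_{\mathbf{R}}\to\mathcal{H}_{\mathbf{R}}^{\perp}$, $\phi(q)=q^{*}$. It is $\mathbf{R}$-linear, bijective because it is an involution, and satisfies $\phi(1)=1$. The key step is anti-multiplicativity in $\mathcal{H}_{\mathbf{R}}$: $(q_{1}q_{2})^{*}=q_{2}^{*}q_{1}^{*}$. By bilinearity it suffices to check this on basis elements. For scalars it is trivial. For a scalar times a basis vector the sign just passes through. For $i^{2}=-1$ we get $(-1)^{*}=-1=(-i)(-i)$. For $ij=k$ we get $(ij)^{*}=-k$ and $j^{*}i^{*}=(-j)(-i)=ji=-k$; the remaining pairs follow in the same way. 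Then
\[
\phi(q_{1}q_{2})=(q_{1}q_{2})^{*}=q_{2}^{*}q_{1}^{*}=q_{1}^{*}\cdot_{\perp}q_{2}^{*}=\phi(q_{1})\cdot_{\perp}\phi(q_{2}),
\]
so $\phi$ is a ring isomorphism.

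An alternative, which may match the paper's intent better, is the map $\psi(w+xi+yj+zk)=w+xi+zj+yk$, which swaps $j$ and $k$. One checks that $\psi(i)\psi(j)=i\cdot_{\perp}k=-j$, which should equal $\psi(ij)=\psi(k)=j$; that fails. So instead I would use $\psi(w+xi+yj+zk)=w-xi-yj-zk$, which is just conjugation again, or a sign change $i\mapsto -i$, $j\mapsto -j$, $k\mapsto -k$. Either way the verification reduces to the nine basis products. I would also note that $\phi$ preserves the norm, since $\|q^{*}\|^{2}=\|q\|^{2}$, which fits the invertibility criterion in the preceding Proposition.

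The main obstacle I expect is not the computation but pinning down what ``induced ring with opposite orientation'' means precisely. Identifying it with the opposite ring settles both well-definedness and associativity at once. After that, the only real work is the anti-multiplicativity of conjugation, which is a finite sign check.
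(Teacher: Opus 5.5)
The paper gives no proof of this theorem. It sits in the block of results quoted ``without proof'' from the author's earlier work, so there is no argument in the text to compare against. Your main argument is correct and complete. Because $\mathbf{R}$ is commutative and central, both $\cdot_{\perp}$ and $(q_{1},q_{2})\mapsto q_{2}q_{1}$ are $\mathbf{R}$-bilinear and agree on $\{1,i,j,k\}$. So $\mathcal{H}_{\mathbf{R}}^{\perp}$ is exactly the opposite ring, and the ring axioms come for free. Conjugation is an $\mathbf{R}$-linear involution fixing $1$ and satisfying $(q_{1}q_{2})^{*}=q_{2}^{*}q_{1}^{*}$, and your basis check of this is right. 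Hence conjugation is a ring isomorphism $\mathcal{H}_{\mathbf{R}}\to\mathcal{H}_{\mathbf{R}}^{\perp}$. One small point: your ``sign change $i\mapsto -i$, $j\mapsto -j$, $k\mapsto -k$'' is the same map as conjugation, not an alternative.

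Your side remark on the swap $\psi(w+xi+yj+zk)=w+xi+zj+yk$ is wrong: $\psi$ does not fail. Your own table for $\mathcal{H}_{\mathbf{R}}^{\perp}$ has $ik=j$, so $\psi(i)\cdot_{\perp}\psi(j)=i\cdot_{\perp}k=j=\psi(k)=\psi(ij)$, not $-j$. The other products check out the same way:
\begin{itemize}
\item $\psi(j)\cdot_{\perp}\psi(k)=k\cdot_{\perp}j=i=\psi(jk)$;
\item $\psi(k)\cdot_{\perp}\psi(i)=j\cdot_{\perp}i=k=\psi(j)=\psi(ki)$;
\item the three reversed products match with a minus sign, e.g.\ $\psi(j)\cdot_{\perp}\psi(i)=k\cdot_{\perp}i=-j=\psi(-k)=\psi(ji)$;
\item squares go to squares.
\end{itemize}
So the transposition $j\leftrightarrow k$ is also an isomorphism, and arguably the most literal reading of ``opposite orientation'': the reversed table is just the original one with $j$ and $k$ renamed. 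Transport of structure along $\psi$ would also give associativity of $\mathcal{H}_{\mathbf{R}}^{\perp}$ without identifying it with the opposite ring. Your conjugation route instead rests on the general fact that a ring with an anti-automorphism is isomorphic to its opposite. Your proof never uses $\psi$, so its validity is unaffected, but the sentence claiming $\psi$ fails should be deleted or corrected.
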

\begin{theorem}
If $q=w+xi+yj+zk=w+u \in \mathcal{H_{\mathbf{R}}}$ with $\mid\mid u \mid\mid=1$, then $q^{n}=q_{n}+q_{n}^{\prime}u$ where $\left( \begin{array}{c} q_{n} \\
q_{n}^{\prime}\end{array}\right)$=$\left(\begin{array}{cc} a & -1 \\ 1 & a\end{array} \right)^{n-1}$ $\left( \begin{array}{c} a \\
1 \end{array}\right)$ and $n$ is a positive integer.
\end{theorem}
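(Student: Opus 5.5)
The statement writes $q=w+u$ but the recursion uses $a$; I take $a=w$, so $q=a+u$ with $a\in\mathbf{R}$ a scalar and $u=xi+yj+zk$ a pure vector quaternion with $\mid\mid u\mid\mid^{2}=x^{2}+y^{2}+z^{2}=1$. The plan is induction on $n$, built on one identity: $u^{2}=-1$ in $\mathcal{H_{\mathbf{R}}}$. To get it, I apply the multiplication formula with $w_{1}=w_{2}=0$. The scalar part of $u\cdot u$ is $-(x^{2}+y^{2}+z^{2})=-\mid\mid u\mid\mid^{2}=-1$. The vector parts are the cross-product terms $y z-z y$, $-xz+zx$, $xy-xy$, and these vanish because $\mathbf{R}$ is commutative. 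So $u^{2}=-1$ holds over any commutative unital ring, exactly as noted for $\mathbb{H}$ in the introduction.

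I also need the scalar $a$ to commute with $u$. This holds because real scalars are central in $\mathcal{H_{\mathbf{R}}}$: multiplication is $\mathbf{R}$-bilinear and $\mathbf{R}$ is commutative. Consequently the set $\{\alpha+\beta u \mid \alpha,\beta\in\mathbf{R}\}$ is closed under multiplication, with
\[(\alpha+\beta u)(a+u)=(a\alpha-\beta)+(\alpha+a\beta)u .\]
In matrix form, multiplying on the right by $q$ sends the coefficient vector $(\alpha,\beta)^{T}$ to $\left(\begin{array}{cc} a & -1 \\ 1 & a\end{array}\right)(\alpha,\beta)^{T}$.

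The induction then runs as follows.
\begin{itemize}
\item Base case $n=1$: $q^{1}=a+1\cdot u$, so $(q_{1},q_{1}^{\prime})^{T}=(a,1)^{T}$. This equals the zeroth power of the matrix (the identity) applied to $(a,1)^{T}$.
\item Inductive step: assume $q^{n}=q_{n}+q_{n}^{\prime}u$ with the stated formula. Then $q^{n+1}=q^{n}q$, and by the displayed product $(q_{n+1},q_{n+1}^{\prime})^{T}$ is the matrix applied to $(q_{n},q_{n}^{\prime})^{T}$. This gives the matrix to the power $n$ applied to $(a,1)^{T}$, which is the claim for $n+1$.
\end{itemize}
Associativity of $\mathcal{H_{\mathbf{R}}}$ justifies writing $q^{n+1}=q^{n}q$; it is inherited from the standard quaternion rules, since the structure constants are integers.

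The only step needing real care is $u^{2}=-1$, specifically that the vector part vanishes. That uses commutativity of $\mathbf{R}$, and it also uses that $\mid\mid u\mid\mid=1$ really means $\mid\mid u\mid\mid^{2}=x^{2}+y^{2}+z^{2}=1$ in $\mathbf{R}$. Everything after that is routine bookkeeping.
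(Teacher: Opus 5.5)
Your proof is correct. There is no proof in the paper to compare it with: this theorem is one of the results the paper states without proof, deferring details to the author's earlier reference. The only related ingredient the paper records is the remark in the introduction that $u^{2}=-1$ when $\mid\mid u\mid\mid=1$, and it states this only for real quaternions.

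Your argument supplies what is missing:
\begin{itemize}
\item You verify that $u^{2}=-1$ holds over an arbitrary commutative unital $\mathbf{R}$. The scalar part is $-\mid\mid u\mid\mid^{2}=-1$, and the cross terms cancel by commutativity of $\mathbf{R}$.
\item You note that scalars are central.
\item You observe that right multiplication by $q$ acts on the coefficient vector of $\alpha+\beta u$ by the matrix $\left(\begin{array}{cc} a & -1 \\ 1 & a\end{array}\right)$.
\end{itemize}
After that the induction is immediate. Nothing in it uses finiteness of $\mathbf{R}$ or invertibility of $2$, so it covers the full generality the statement claims.

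Your reading $a=w$ is not just a convenient guess. The case $n=1$ forces it, because the formula there reads $q=a+u$.
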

\begin{theorem} \label{t1}
Let $\mathbf{R}$ be a finite commutative unital ring. Then $U(\mathcal{H_{\mathbf{R}}})=\{ q=w+xi+yj+zk \in \mathcal{H_{\mathbf{R}}} \ | q^{s}=1, \ \text{for some positive integer} \ s \}$ and $q^{s}$ can be computed using the following recurrence formula:\\
$$q^{s}=q_{s}+q_{s}^{\prime}i+q_{s}^{\prime \prime}j+q_{s}^{\prime \prime \prime }k $$ where \\
 
 $$ \left( \begin{array}{c}
 q_{s} \\
 q_{s}^{\prime} \\
 q_{s}^{\prime \prime} \\
 q_{s}^{\prime \prime \prime }
 \end{array}\right)
 = \left(\begin{array}{cccc} 
w & -x & -y& -z \\
x & w & z & -y \\
y & -z & w & x \\
z & y & -x & w 
\end{array} \right)
 \left( \begin{array}{c}
 q_{s-1} \\
 q_{s-1}^{\prime} \\
 q_{s-1}^{\prime \prime} \\
 q_{s-1}^{\prime \prime \prime }

  \end{array}\right)$$

 with  
$\left( \begin{array}{c}
 q_{1} \\
 q_{1}^{\prime} \\
 q_{1}^{\prime \prime} \\
 q_{1}^{\prime \prime \prime }

  \end{array}\right)$= $\left( \begin{array}{c}
 w \\
 x\\
 y \\
 z

  \end{array}\right).$
  
\end{theorem}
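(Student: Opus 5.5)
The statement has two parts. The first is a set equality, $U(\mathcal{H_{\mathbf{R}}})=\{q : q^{s}=1 \text{ for some } s\geq 1\}$. The second is the recurrence for computing $q^{s}$. I will prove them separately.

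For the set equality, the inclusion $\supseteq$ is immediate. If $q^{s}=1$ with $s\geq 1$, then $q\cdot q^{s-1}=q^{s-1}\cdot q=1$, so $q$ is invertible with inverse $q^{s-1}$ (interpreting $q^{0}=1$). For the inclusion $\subseteq$, I use finiteness. Since $\mathbf{R}$ is finite, $\mathcal{H_{\mathbf{R}}}\cong \mathbf{R}^{4}$ as a set is finite, so $U(\mathcal{H_{\mathbf{R}}})$ is a finite group under multiplication. Associativity holds because the Hamilton product is associative over any commutative ring: the defining identities are polynomial identities with integer coefficients. By Lagrange's theorem, or simply by pigeonhole on the powers $q,q^{2},q^{3},\dots$, every $q\in U(\mathcal{H_{\mathbf{R}}})$ has finite order $s$, so $q^{s}=1$.

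If I want the pigeonhole version written out: there exist $m<n$ with $q^{m}=q^{n}$. Multiplying by $q^{-m}$ gives $q^{n-m}=1$. This is the only place invertibility is used, and it is the one point needing care. Without invertibility, for example when $\|q\|^{2}$ is a non-unit, one only gets eventual periodicity of the powers, not $q^{s}=1$. Proposition 1 is not strictly needed here, though it characterizes exactly which $q$ are in the set.

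For the recurrence, I write $q^{s}=q\cdot q^{s-1}$ and set $q^{s-1}=q_{s-1}+q_{s-1}'i+q_{s-1}''j+q_{s-1}'''k$. Then I apply the explicit Hamilton product formula from the introduction with $(w_1,x_1,y_1,z_1)=(w,x,y,z)$ and $(w_2,x_2,y_2,z_2)$ the coordinates of $q^{s-1}$. Reading off the coefficients gives:
\begin{itemize}
\item real part: $wq_{s-1}-xq_{s-1}'-yq_{s-1}''-zq_{s-1}'''$;
\item $i$-part: $xq_{s-1}+wq_{s-1}'+zq_{s-1}''-yq_{s-1}'''$, since $y_1z_2-z_1y_2$ becomes $zq''-yq'''$;
\item $j$-part: $yq_{s-1}-zq_{s-1}'+wq_{s-1}''+xq_{s-1}'''$;
\item $k$-part: $zq_{s-1}+yq_{s-1}'-xq_{s-1}''+wq_{s-1}'''$.
\end{itemize}
These are exactly the rows of the displayed $4\times4$ matrix. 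The base case $q^{1}=q$ gives the initial vector $(w,x,y,z)^{T}$, and induction on $s$ completes the proof. The only real work is this bookkeeping of signs, which is a direct check.
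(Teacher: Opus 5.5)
\textbf{Set equality.} The paper gives no proof of this theorem; it is quoted without proof from earlier work, so there is no route to compare against. Your proof of the set equality is correct and complete. One direction uses that the unit group of a finite associative ring is finite, so every unit has finite order. The other uses that $q^{s}=1$ exhibits $q^{s-1}$ as a two-sided inverse of $q$.

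\textbf{Recurrence.} The recurrence part has a sign error in exactly the step you call ``the only real work''. You substitute $(w_1,x_1,y_1,z_1)=(w,x,y,z)$ and $(w_2,x_2,y_2,z_2)=(q_{s-1},q_{s-1}',q_{s-1}'',q_{s-1}''')$, i.e.\ you compute $q\cdot q^{s-1}$. With that substitution the cross terms are as follows.
\begin{itemize}
\item $y_1z_2-z_1y_2$ equals $yq_{s-1}'''-zq_{s-1}''$, not $zq_{s-1}''-yq_{s-1}'''$.
\item $-x_1z_2+z_1x_2$ equals $-xq_{s-1}'''+zq_{s-1}'$.
\item $x_1y_2-x_2y_1$ equals $xq_{s-1}''-yq_{s-1}'$.
\end{itemize}
Done honestly, the substitution yields the left-multiplication matrix, whose last three rows are $(x,w,-z,y)$, $(y,z,w,-x)$ and $(z,-y,x,w)$. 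That is not the displayed matrix. The displayed matrix is the matrix of \emph{right} multiplication by $q$. Substituting $(w_1,\dots)=(q_{s-1},\dots)$ and $(w_2,\dots)=(w,x,y,z)$ reproduces its rows exactly. So the $i$-, $j$- and $k$-parts you list are those of $q^{s-1}\cdot q$, not of $q\cdot q^{s-1}$. As written, your verification asserts an identity that is false for a general vector $(q_{s-1},\dots,q_{s-1}''')$.

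\textbf{Repairs.} Either one fixes the gap in a line.
\begin{itemize}
\item Write $q^{s}=q^{s-1}\cdot q$, which is valid by associativity, instead of $q\cdot q^{s-1}$. Then the displayed matrix is correct as a formal identity.
\item Keep your factorization, but first show by induction that the vector part of $q^{s-1}$ is $\beta_{s-1}(xi+yj+zk)$ for some $\beta_{s-1}\in\mathbf{R}$. This holds because $(xi+yj+zk)^{2}=-(x^{2}+y^{2}+z^{2})$ is central. All cross-product terms then vanish, so the left and right multiplication matrices agree on the vectors that actually occur.
\end{itemize}
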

\begin{proposition}
Let $\mathbf{R}=\mathbb{Z}_{n}$, the ring of integers modulo $n$. If $n=2^{t}$, then $|U(\mathcal{H_{\mathbf{R}}})|=\phi(|\mathcal{H_{\mathbf{R}}}|)$.
\end{proposition}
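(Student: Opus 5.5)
By the first proposition of this section, $q=w+xi+yj+zk$ is a unit in $\mathcal{H}_{\mathbf{R}}$ exactly when $\|q\|^{2}=w^{2}+x^{2}+y^{2}+z^{2}\in U(\mathbb{Z}_{2^{t}})$. So I will count quadruples $(w,x,y,z)\in\mathbb{Z}_{2^{t}}^{4}$ whose norm is odd and compare the count with $\phi(|\mathcal{H}_{\mathbf{R}}|)=\phi(2^{4t})=2^{4t-1}$.

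First I reduce the condition modulo $2$. The units of $\mathbb{Z}_{2^{t}}$ are exactly the odd residues. Since $a^{2}\equiv a \pmod 2$, we have
$$w^{2}+x^{2}+y^{2}+z^{2}\equiv w+x+y+z \pmod 2.$$
Hence $q$ is a unit if and only if $w+x+y+z$ is odd, and this condition depends only on the residues of $w,x,y,z$ modulo $2$.

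Next I count. Among the $2^{4}=16$ parity patterns in $\mathbb{Z}_{2}^{4}$, exactly half, namely $8$, have odd coordinate sum. For instance, the map that flips the parity of $w$ is a bijection between patterns with even sum and patterns with odd sum. Each parity pattern lifts to $(2^{t-1})^{4}=2^{4t-4}$ quadruples in $\mathbb{Z}_{2^{t}}^{4}$. Therefore
$$|U(\mathcal{H}_{\mathbf{R}})|=8\cdot 2^{4t-4}=2^{4t-1}.$$

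Finally, $|\mathcal{H}_{\mathbf{R}}|=(2^{t})^{4}=2^{4t}$, and $\phi(2^{4t})=2^{4t}-2^{4t-1}=2^{4t-1}$, which matches the count and proves the claim. There is no real obstacle here. The only points that need care are checking that the invertibility criterion applies with $\mathbf{R}=\mathbb{Z}_{2^{t}}$, and reading $\phi(|\mathcal{H}_{\mathbf{R}}|)$ as Euler's function evaluated at the integer $2^{4t}$.
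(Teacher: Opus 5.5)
The paper gives no proof of this proposition: it is one of the results stated without proof and deferred to an earlier reference. So there is no in-paper argument to compare against. Your proof is correct and complete as it stands:
\begin{itemize}
\item the norm criterion reduces invertibility to $w^{2}+x^{2}+y^{2}+z^{2}$ being odd;
\item that condition depends only on the parity of $w+x+y+z$;
\item the count $8\cdot 2^{4t-4}=2^{4t-1}=\phi(2^{4t})$ is right.
\end{itemize}

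On the point you flagged, the criterion is stated in the paper for an arbitrary commutative unital ring, so it applies to $\mathbb{Z}_{2^{t}}$ directly. It is also easy to confirm by hand.
\begin{itemize}
\item Since $qq^{*}=q^{*}q=\|q\|^{2}$ is a central scalar, $\|q\|^{-2}q^{*}$ is a two-sided inverse whenever $\|q\|^{2}$ is a unit.
\item Conversely, the identity $(qr)^{*}=r^{*}q^{*}$ holds over any commutative ring. It gives $\|qr\|^{2}=\|q\|^{2}\|r\|^{2}$, so $qr=1$ forces $\|q\|^{2}$ to be a unit.
\end{itemize}

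The only tacit assumption in your argument is $t\ge 1$. You need it for the reduction modulo $2$ and for the exponent $4t-4$ in the lift count. For $t=0$ the ring is trivial and both sides equal $1$.
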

\begin{proposition} \label{rd21}
Let $P=w_{1}+x_{1}i+y_{1}j+z_{1}k$ and $Q=w_{2}+x_{2}i+y_{2}j+z_{2}k$ be any two elements of $\mathcal{H}_{\mathbf{R}}$, where $\mathbf{R}$ is a commutative unital ring and if $\mathbf{R }$ is an integral domain or $2 \in U(\mathbf{R})$, then $PQ=QP$ $\iff$ the following simultaneous equations are satisfied:\\ $$y_{1}z_{2}=z_{1}y_{2}$$ $$z_{1}x_{2}=x_{1}z_{2}$$ $$x_{1}y_{2}=y_{1}x_{2}$$. 
\end{proposition}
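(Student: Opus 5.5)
The plan is to compute the commutator $PQ-QP$ directly from the Hamilton product formula given in the introduction, and then use the hypothesis on $\mathbf{R}$ to cancel a factor of $2$.

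First, expand $PQ$ using the product formula with $(w_{1},x_{1},y_{1},z_{1})$ and $(w_{2},x_{2},y_{2},z_{2})$. Obtain $QP$ by swapping the indices $1\leftrightarrow 2$.

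The terms that are symmetric under the swap cancel in the difference. These are the scalar part $w_{1}w_{2}-x_{1}x_{2}-y_{1}y_{2}-z_{1}z_{2}$ and the terms $w_{1}x_{2}+w_{2}x_{1}$, $w_{1}y_{2}+w_{2}y_{1}$ and $w_{1}z_{2}+w_{2}z_{1}$. Only the antisymmetric (cross-product) terms survive, so I expect
$$PQ-QP = 2(y_{1}z_{2}-z_{1}y_{2})\,i + 2(z_{1}x_{2}-x_{1}z_{2})\,j + 2(x_{1}y_{2}-y_{1}x_{2})\,k .$$
Commutativity of $\mathbf{R}$ is used throughout to identify, for instance, $y_{2}z_{1}$ with $z_{1}y_{2}$. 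This bookkeeping is the only computational step, and I will check the sign of each component carefully against the stated formula.

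Since $\{1,i,j,k\}$ is a free basis for $\mathcal{H}_{\mathbf{R}}$ as an $\mathbf{R}$-module, $PQ=QP$ holds if and only if $2c=0$ for each of the three components $c$ in the display. The reverse implication is then immediate: if the three equations hold, every component vanishes.

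The main point, and the only place the hypothesis enters, is the forward implication: passing from $2c=0$ to $c=0$. I will split into the two cases allowed by the hypothesis.
\begin{itemize}
\item If $2\in U(\mathbf{R})$, multiply by $2^{-1}$.
\item If $\mathbf{R}$ is an integral domain, then $2c=0$ forces $c=0$ or $2=0$.
\end{itemize}
The subcase $2=0$, i.e.\ characteristic $2$, is the obstacle. There $2c=0$ holds automatically, so the equivalence would fail; for example, $i$ and $j$ commute over $\mathbb{F}_{2}$ while $x_{1}y_{2}-y_{1}x_{2}=1\neq0$. So the argument needs $2\neq 0$. I would read the integral-domain hypothesis as implicitly excluding characteristic $2$, or else add that assumption explicitly. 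With that, $2c=0$ gives $c=0$, and the three simultaneous equations follow.
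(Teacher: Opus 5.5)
Your proof is correct. The paper gives no proof of this proposition; it lists it among results quoted from the author's earlier work. Your direct computation of the commutator is the natural argument. From the stated product formula, the scalar part and all $w$-terms cancel, leaving exactly $PQ-QP = 2(y_{1}z_{2}-z_{1}y_{2})i + 2(z_{1}x_{2}-x_{1}z_{2})j + 2(x_{1}y_{2}-y_{1}x_{2})k$, with the signs as you wrote them. The free-basis step and the two cases ($2$ a unit; integral domain with $2\neq 0$) are fine.

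Your handling of characteristic $2$ exposes an error in the statement, not a gap in your proof. For an integral domain of characteristic $2$, such as $\mathbb{Z}_{2}$ (which the paper later uses), the ``only if'' direction is false, and your example $P=i$, $Q=j$ is valid.

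You can sharpen this. The forward direction holds exactly when $2c=0$ implies $c=0$ in $\mathbf{R}$. If $2c=0$ with $c\neq 0$, then $P=i$ and $Q=cj$ commute, since $PQ-QP=2ck=0$, yet $x_{1}y_{2}-y_{1}x_{2}=c\neq 0$. This single condition covers both of your cases at once.

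As your formula shows, the converse needs no hypothesis at all: the three equations imply $PQ=QP$ for every commutative $\mathbf{R}$. That is the only direction the paper uses, to conclude that $\mathcal{S}_{\mathbf{R}}$ is commutative. This matters because the paper applies it to $\mathbb{Z}_{n}$ with $n$ even, where $2$ is a zero divisor and the forward direction is unavailable.
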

\noindent
We define $\mathcal{S}_{\mathbf{R}}=\{ w+a(i+j+k) | w,a \in \mathbf{R}\}$. Clearly the elements of $\mathcal{S}_{\mathbf{R}}$ are satisfying the conditions laid in proposition \ref{rd21} and therefore it forms a commutative subring of $\mathcal{H}_{\mathbf{R}}$ and its group of units given by $U(\mathcal{S}_{\mathbf{R}})=\{ w+a(i+j+k) | w^{2}+3a^{2} \in U(\mathbf{R})\}$. We call $\mathcal{S}_{\mathbf{R}}$ as \textit{T-quaternion ring}. 
\begin{proposition}
The T-quaternion $\mathcal{S}_{\mathbf{R}}$ is a commutative unital subring of $\mathcal{H}_{\mathbf{R}}$ which is not an ideal of $\mathcal{H}_{\mathbf{R}}$.
\end{proposition}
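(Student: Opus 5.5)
The plan is to verify three things directly from the multiplication rules: that $\mathcal{S}_{\mathbf{R}}$ is a subring containing $1$, that it is commutative, and that it fails to absorb multiplication by some element of $\mathcal{H}_{\mathbf{R}}$.

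\textbf{Subring with unity.} Clearly $1 = 1 + 0(i+j+k)$ and $0$ lie in $\mathcal{S}_{\mathbf{R}}$, and $\mathcal{S}_{\mathbf{R}}$ is closed under subtraction componentwise. For closure under multiplication, take $P = w_1 + a(i+j+k)$ and $Q = w_2 + b(i+j+k)$ and expand using the Hamilton product formula with $x_1=y_1=z_1=a$ and $x_2=y_2=z_2=b$. Each cross term of the form $y_1z_2 - z_1y_2$ equals $ab-ab=0$. This gives
\[
PQ=(w_1w_2-3ab)+(w_1b+w_2a)(i+j+k),
\]
which lies in $\mathcal{S}_{\mathbf{R}}$.

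\textbf{Commutativity.} The displayed expression is symmetric in the pairs $(w_1,a)$ and $(w_2,b)$, so $PQ=QP$. This is done by direct computation, so there is no need to invoke Proposition \ref{rd21}, whose hypotheses (integral domain or $2\in U(\mathbf{R})$) may fail for a general $\mathbf{R}$.

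\textbf{Not an ideal.} It suffices to exhibit $s\in\mathcal{S}_{\mathbf{R}}$ and $h\in\mathcal{H}_{\mathbf{R}}$ with $hs\notin\mathcal{S}_{\mathbf{R}}$. The simplest choice is $s=1$ and $h=i$: then $i\cdot 1 = i = 0+1i+0j+0k$. Membership in $\mathcal{S}_{\mathbf{R}}$ would require $1=0$ in $\mathbf{R}$, which is impossible since $\mathbf{R}$ is unital with $1\neq 0$. This works for left and right multiplication alike, so $\mathcal{S}_{\mathbf{R}}$ is neither a left nor a right ideal. More conceptually, any ideal containing the unit $1$ is the whole ring, and $\mathcal{S}_{\mathbf{R}}\neq\mathcal{H}_{\mathbf{R}}$.

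The only step needing care is the sign bookkeeping in the product expansion. The one genuine subtlety is the implicit assumption $1\neq 0$ in $\mathbf{R}$, which I will state explicitly.
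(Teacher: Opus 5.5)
Your proof is correct and more complete than what the paper provides. The paper states this proposition without proof and defers to its cited reference. Its only in-text justification is a remark: elements of $\mathcal{S}_{\mathbf{R}}$ satisfy the conditions $y_1z_2=z_1y_2$, $z_1x_2=x_1z_2$, $x_1y_2=y_1x_2$ of Proposition~\ref{rd21} (trivially, since the three vector coordinates coincide), and therefore commute. That remark alone does not give closure under multiplication, since pairwise commuting elements need not form a multiplicatively closed set, and nothing is said about the failure to be an ideal.

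Your explicit expansion $PQ=(w_1w_2-3ab)+(w_1b+w_2a)(i+j+k)$ settles closure and commutativity together. It is also exactly the formula behind the norm $w^2+3a^2$ in the paper's description of $U(\mathcal{S}_{\mathbf{R}})$: take $w_2=w_1$ and $b=-a$.

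Your caution about the hypotheses of Proposition~\ref{rd21} is harmless but unnecessary. In any commutative $\mathbf{R}$ one has $PQ-QP=2(y_1z_2-z_1y_2)i+2(z_1x_2-x_1z_2)j+2(x_1y_2-y_1x_2)k$. So the direction ``conditions imply $PQ=QP$'' holds unconditionally; the integral-domain or $2\in U(\mathbf{R})$ hypothesis is needed only for the converse. The paper's route is therefore valid for general $\mathbf{R}$; it just does not cover closure.

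Your witness $i\cdot 1=i\notin\mathcal{S}_{\mathbf{R}}$ supplies the missing non-ideal part cleanly. Equivalently, a proper subring containing $1$ cannot be a left or right ideal. You are right to flag $1\neq 0$: for the zero ring, $\mathcal{S}_{\mathbf{R}}=\mathcal{H}_{\mathbf{R}}$ and the claim fails.
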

\begin{theorem} \label{rd22}
The T-quaternion ring $\mathcal{S}_{\mathbf{Z}_{n}}$ is a field if and only if $n>3$, $n$ is a prime with $n-3$ has no square roots in $\mathbf{Z}_{n}$. Also, \\
 $ \begin{array}{ll}
    |U(\mathcal{S}_{\mathbf{Z}_{n}})| &= n^{2}-1 \quad \ \ \hbox{if $n$ is a prime and $(n-3)$ does not have  a square root in $\mathbf{Z}_{n}$;}  \\
          & =(n-1)^{2} \quad \hbox{if $n$ is a prime and $(n-3$) has  square roots in $\mathbf{Z}_{n}$.}  
   \end{array}$
\end{theorem}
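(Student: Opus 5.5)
The approach is to identify $\mathcal{S}_{\mathbb{Z}_{n}}$ with a quotient of a polynomial ring, and then read off both the field criterion and the unit counts from the factorisation of a single quadratic.

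\textbf{Step 1: the isomorphism.} Put $u=i+j+k$. By the Hamilton product rules, the cross terms in $u^{2}$ cancel in pairs ($ij+ji=0$, etc.). Hence
\[
u^{2}=i^{2}+j^{2}+k^{2}=-3 .
\]
Consequently $(w_{1}+a_{1}u)(w_{2}+a_{2}u)=(w_{1}w_{2}-3a_{1}a_{2})+(w_{1}a_{2}+w_{2}a_{1})u$. Since $1,u$ are $\mathbf{R}$-linearly independent in $\mathcal{H}_{\mathbf{R}}$, the map $w+at\mapsto w+au$ gives a ring isomorphism
\[
\mathcal{S}_{\mathbf{R}}\cong \mathbf{R}[t]/(t^{2}+3).
\]
This also recovers the unit criterion $w^{2}+3a^{2}\in U(\mathbf{R})$, because $(w+au)(w-au)=w^{2}+3a^{2}$. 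The rest of the argument takes $\mathbf{R}=\mathbb{Z}_{n}$.

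\textbf{Step 2: the field criterion.}
\begin{itemize}
\item If $n$ is composite, then $\mathbb{Z}_{n}\subset\mathcal{S}_{\mathbb{Z}_{n}}$ already contains zero divisors, so $\mathcal{S}_{\mathbb{Z}_{n}}$ is not a field.
\item If $n=p$ is prime, then $\mathbb{F}_{p}[t]/(t^{2}+3)$ is a field if and only if $t^{2}+3$ is irreducible over $\mathbb{F}_{p}$. For a quadratic this means it has no root, i.e.\ $-3\equiv p-3$ has no square root in $\mathbb{Z}_{p}$.
\item The small primes are excluded automatically. For $p=2$ we have $-3\equiv 1=1^{2}$, and for $p=3$ we have $-3\equiv 0=0^{2}$. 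So "$n$ prime with $n-3$ a non-square" already forces $n>3$, which gives the "if and only if".
\end{itemize}

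\textbf{Step 3: the unit counts.}
\begin{itemize}
\item In the irreducible case $\mathcal{S}_{\mathbb{Z}_{p}}$ is a field with $p^{2}$ elements, so $|U|=p^{2}-1$.
\item If $p-3$ has a square root $s$ and $p>3$, then $s\neq 0$ and $s\neq -s$ because $p$ is odd. So $t^{2}+3=(t-s)(t+s)$ with coprime linear factors. The Chinese Remainder Theorem gives $\mathcal{S}_{\mathbb{Z}_{p}}\cong\mathbb{Z}_{p}\times\mathbb{Z}_{p}$, hence $|U|=(p-1)^{2}$.
\end{itemize}

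\textbf{Main obstacle.} The delicate point is the degenerate primes $p=2,3$ in the second formula. There $t^{2}+3$ has a repeated root: $(t+1)^{2}$ mod $2$ and $t^{2}$ mod $3$. The ring is then local, $\mathbb{F}_{p}[t]/(t^{2})$, with $p^{2}-p$ units, giving $2$ and $6$ rather than $(p-1)^{2}$. A direct count with $w^{2}+3a^{2}\in U(\mathbb{Z}_{p})$ confirms this. So the proof must read the second case with the standing hypothesis $p>3$, and I would state that restriction explicitly.
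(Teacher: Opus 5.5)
The paper gives no proof of this theorem. It appears in the block of results ``stated without proof'' and deferred to an earlier reference, so there is no in-paper route to compare with. Judged on its own, your argument is correct and complete.

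The key step is the computation $u^{2}=-3$ for $u=i+j+k$, which gives $\mathcal{S}_{\mathbf{R}}\cong\mathbf{R}[t]/(t^{2}+3)$. Everything then reduces to how $t^{2}+3$ factors over $\mathbb{F}_{p}$:
\begin{itemize}
\item if it is irreducible, you get a field with $p^{2}-1$ units;
\item if it has two distinct linear factors, you get $\mathbb{F}_{p}\times\mathbb{F}_{p}$ with $(p-1)^{2}$ units;
\item for composite $n$, the zero divisors of $\mathbb{Z}_{n}$ already rule out a field.
\end{itemize}
The elementary alternative, suggested by the unit criterion $w^{2}+3a^{2}\in U(\mathbb{Z}_{p})$, is to count zeros of the norm form directly. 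If $-3$ is a non-square, only $(w,a)=(0,0)$ satisfies $w^{2}+3a^{2}=0$. If $-3=s^{2}$ with $s\neq 0$, the zeros are the $2p-1$ pairs with $w=\pm sa$. Both routes give the same numbers. Your structural version has the advantage of explaining the exceptional primes as exactly the repeated-root case.

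Your caveat about $p=2,3$ is not just caution: read literally, the second formula is false there. $\mathcal{S}_{\mathbb{Z}_{2}}$ has $2$ units, not $1$, and $\mathcal{S}_{\mathbb{Z}_{3}}$ has $6$, not $4$. The paper's own corollary $|U(\mathcal{S}_{\mathbf{Z}_{n}})|=n\phi(n)$ for $n=2^{s}3^{t}$ gives exactly your values $2$ and $6$ at $n=2,3$. So it contradicts the literal second case and confirms that this case must carry the hypothesis $n>3$, as you propose.

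The only addition worth making is a one-line disposal of $n=1$. It is neither prime nor composite, and $\mathcal{S}_{\mathbf{Z}_{1}}$ is the zero ring, which is not a field.
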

\noindent
The conjecture given below is a reduction formula to compute $|U(\mathcal{S}_{\mathbf{Z}_{p^{e}}})|$. 
\begin{conjecture} \label{rd23}
Let $p$ be a prime and $e>0$ be an integer. Then $|U(\mathcal{S}_{\mathbf{Z}_{p^{e}}})|=p^{2(e-1)} \times |U(\mathcal{S}_{\mathbf{Z}_{p}})|$.
\end{conjecture}
\noindent
Assuming that the above conjecture is true. Then we will have the following theorem. 
\begin{theorem} \label{rd24}
If $n=p_{1}^{e_{1}}p_{2}^{e_{2}} \cdots p_{l}^{e_{l}}$, then $|U(\mathcal{S}_{\mathbf{Z}_{n}})|=\prod_{r=1}^{l} (p_{r}^{2(e_{r}-1)} \times |U(\mathcal{S}_{\mathbf{Z}_{p_{r}}})|)$.
\end{theorem}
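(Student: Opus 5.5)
The plan is to reduce to prime powers via the Chinese Remainder Theorem and then apply Conjecture \ref{rd23} to each factor.

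\textbf{Step 1: identify $\mathcal{S}_{\mathbf{R}}$ as a quotient of a polynomial ring.} Put $u=i+j+k$. A direct computation with the Hamilton rules gives $u^{2}=-3$, since the cross terms $ij+ji$, $jk+kj$, $ki+ik$ all vanish. Hence every element of $\mathcal{S}_{\mathbf{R}}$ is uniquely of the form $w+au$, and multiplication is
$$(w_{1}+a_{1}u)(w_{2}+a_{2}u)=(w_{1}w_{2}-3a_{1}a_{2})+(w_{1}a_{2}+w_{2}a_{1})u.$$
The map $X\mapsto u$ therefore induces a ring isomorphism $\mathcal{S}_{\mathbf{R}}\cong \mathbf{R}[X]/(X^{2}+3)$. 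This construction is functorial in $\mathbf{R}$: for a ring isomorphism $\mathbf{R}\cong \mathbf{R}_{1}\times\mathbf{R}_{2}$ we get
$$\mathcal{S}_{\mathbf{R}}\cong \mathcal{S}_{\mathbf{R}_{1}}\times \mathcal{S}_{\mathbf{R}_{2}},$$
with $w+au\mapsto (w_{1}+a_{1}u,\, w_{2}+a_{2}u)$ componentwise. One can also check this directly: componentwise maps respect the multiplication formula because it is polynomial in the coordinates with integer coefficients.

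\textbf{Step 2: apply the Chinese Remainder Theorem and take unit groups.} Since $p_{1}^{e_{1}},\dots,p_{l}^{e_{l}}$ are pairwise coprime, $\mathbb{Z}_{n}\cong \prod_{r}\mathbb{Z}_{p_{r}^{e_{r}}}$. By Step 1 and induction on $l$,
$$\mathcal{S}_{\mathbb{Z}_{n}}\cong\prod_{r=1}^{l}\mathcal{S}_{\mathbb{Z}_{p_{r}^{e_{r}}}}.$$
The unit group of a finite direct product of rings is the product of the unit groups, so
$$|U(\mathcal{S}_{\mathbb{Z}_{n}})|=\prod_{r}|U(\mathcal{S}_{\mathbb{Z}_{p_{r}^{e_{r}}}})|.$$
As a consistency check with the paper's description of the units as $\{w+au \mid w^{2}+3a^{2}\in U(\mathbf{R})\}$, note that $w^{2}+3a^{2}$ is a unit mod $n$ if and only if it is a unit mod each $p_{r}^{e_{r}}$.

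\textbf{Step 3: invoke the conjecture.} Applying Conjecture \ref{rd23} to each factor replaces $|U(\mathcal{S}_{\mathbb{Z}_{p_{r}^{e_{r}}}})|$ by $p_{r}^{2(e_{r}-1)}\,|U(\mathcal{S}_{\mathbb{Z}_{p_{r}}})|$, which yields the stated formula.

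The only genuine content is Step 1. The point to verify is that the product of the images is computed correctly, that is, that $u^{2}=-3$ holds and that $\mathcal{S}$ is closed under multiplication, which makes the CRT map a ring isomorphism rather than merely a bijection. Once that is established, Steps 2 and 3 are immediate.
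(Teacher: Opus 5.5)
Your argument is correct and follows the route the paper intends. The paper gives no written proof; it presents Theorem~\ref{rd24} as a consequence of Conjecture~\ref{rd23} together with multiplicativity over the coprime factors $p_{r}^{e_{r}}$, and your identification $\mathcal{S}_{\mathbf{R}}\cong\mathbf{R}[X]/(X^{2}+3)$ with the resulting splitting $\mathcal{S}_{\mathbb{Z}_{n}}\cong\prod_{r}\mathcal{S}_{\mathbb{Z}_{p_{r}^{e_{r}}}}$ makes that step precise. You could also drop the dependence on the conjecture: $w+a(i+j+k)$ is a unit in $\mathcal{S}_{\mathbb{Z}_{p^{e}}}$ iff $p\nmid w^{2}+3a^{2}$, which depends only on $(w,a)$ modulo $p$, and each pair modulo $p$ has exactly $p^{2(e-1)}$ lifts, so Conjecture~\ref{rd23} actually holds and the theorem is unconditional.
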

\begin{corollary}
If $n=2^{s}3^{t}$ with integers $s,t \geq 0$, $|U(\mathcal{S}_{\mathbf{Z}_{n}})| =n\phi(n).$
\end{corollary}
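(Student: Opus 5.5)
The plan is to obtain this directly from Theorem \ref{rd24}, which (like the corollary itself) rests on Conjecture \ref{rd23} being true. Write $n=2^{s}3^{t}$. The formula of Theorem \ref{rd24} then has at most two factors, one for $p=2$ and one for $p=3$; a prime whose exponent is $0$ simply does not appear. So it suffices to know $|U(\mathcal{S}_{\mathbf{Z}_{2}})|$ and $|U(\mathcal{S}_{\mathbf{Z}_{3}})|$. Theorem \ref{rd22} only treats $n>3$, so I will compute these two base cases by hand from the description $U(\mathcal{S}_{\mathbf{R}})=\{w+a(i+j+k) \mid w^{2}+3a^{2}\in U(\mathbf{R})\}$.

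For $\mathbf{R}=\mathbf{Z}_{2}$ we have $w^{2}+3a^{2}\equiv w^{2}+a^{2}\equiv w+a \pmod 2$. This is a unit exactly when $(w,a)\in\{(1,0),(0,1)\}$, so $|U(\mathcal{S}_{\mathbf{Z}_{2}})|=2$. For $\mathbf{R}=\mathbf{Z}_{3}$ we have $w^{2}+3a^{2}\equiv w^{2}\pmod 3$. This is a unit exactly when $w\neq 0$, with $a$ arbitrary, so $|U(\mathcal{S}_{\mathbf{Z}_{3}})|=2\cdot 3=6$.

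Substituting these into the prime-power factor $p^{2(e-1)}|U(\mathcal{S}_{\mathbf{Z}_{p}})|$ from Conjecture \ref{rd23}, we get, for $s\geq 1$,
$$2^{2(s-1)}\cdot 2=2^{2s-1}=2^{s}\cdot 2^{s-1}=2^{s}\phi(2^{s}),$$
and, for $t\geq 1$,
$$3^{2(t-1)}\cdot 6=2\cdot 3^{2t-1}=3^{t}\cdot 2\cdot 3^{t-1}=3^{t}\phi(3^{t}).$$
When an exponent is $0$, the corresponding factor is $1=p^{0}\phi(p^{0})$, so the same formula covers that case (and $n=1$ is trivially consistent). Multiplying the two factors and using the multiplicativity of $\phi$ on the coprime parts $2^{s}$ and $3^{t}$ gives $|U(\mathcal{S}_{\mathbf{Z}_{n}})|=2^{s}3^{t}\,\phi(2^{s})\phi(3^{t})=n\phi(n)$.

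There is no genuine obstacle here. The only points needing care are that Theorem \ref{rd22} does not apply to $p=2,3$, so those two cases must be computed directly, and that the result is conditional on Conjecture \ref{rd23}.
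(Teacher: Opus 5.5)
Your proof is correct and is the derivation the paper intends: the paper states the corollary without proof as an immediate consequence of Theorem \ref{rd24}, and you compute $|U(\mathcal{S}_{\mathbf{Z}_{2}})|=2$ and $|U(\mathcal{S}_{\mathbf{Z}_{3}})|=6$ by hand and substitute. Computing these by hand is right, because read literally the formulas of Theorem \ref{rd22} would give the wrong values $1$ and $4$ at $p=2,3$. As a side remark, for $n=2^{s}3^{t}$ you do not need Conjecture \ref{rd23}: $w^{2}+3a^{2}$ is a unit mod $n$ exactly when $w+a$ is odd (if $2\mid n$) and $3\nmid w$ (if $3\mid n$), which gives $n^{2}\cdot\phi(n)/n=n\phi(n)$ pairs $(w,a)$ directly.
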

\begin{corollary}
If $n=p_{1}^{e_{1}}p_{2}^{e_{2}} \cdots p_{l}^{e_{l}}$ with $3<p_{1}<p_{2}< \cdots <p_{l}$ and if all $p_{r}-3$ have square roots in $\mathbf{Z}_{p_{r}}$ for $r=1,2, \cdots l, $ then $|U(\mathcal{S}_{\mathbf{Z}_{n}})|=(\phi(n))^{2}.$
\end{corollary}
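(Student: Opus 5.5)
The plan is to combine Theorem \ref{rd24}, which gives the order of the unit group of $\mathcal{S}_{\mathbf{Z}_{n}}$ as a product of local factors, with the prime case of Theorem \ref{rd22}, and then use the multiplicativity of Euler's function. Like Theorem \ref{rd24} itself, the argument is conditional on Conjecture \ref{rd23}.

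First, write $n=p_{1}^{e_{1}}\cdots p_{l}^{e_{l}}$ as in the statement. By Theorem \ref{rd24},
$$|U(\mathcal{S}_{\mathbf{Z}_{n}})|=\prod_{r=1}^{l}\left(p_{r}^{2(e_{r}-1)}\times |U(\mathcal{S}_{\mathbf{Z}_{p_{r}}})|\right).$$
This factorization reflects the Chinese Remainder decomposition $\mathcal{S}_{\mathbf{Z}_{n}}\cong\prod_{r}\mathcal{S}_{\mathbf{Z}_{p_{r}^{e_{r}}}}$ together with Conjecture \ref{rd23}. So it suffices to evaluate each local factor.

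Second, fix $r$. Since $p_{r}>3$ is prime and $p_{r}-3$ has a square root in $\mathbf{Z}_{p_{r}}$, the second case of Theorem \ref{rd22} gives $|U(\mathcal{S}_{\mathbf{Z}_{p_{r}}})|=(p_{r}-1)^{2}$. The meaning of this case is transparent: if $s^{2}=-3$ with $s\neq 0$, then $w^{2}+3a^{2}=(w-sa)(w+sa)$. The change of variables $(w,a)\mapsto(w-sa,w+sa)$ is invertible because $2s$ is a unit when $p_{r}>3$. Hence the units are exactly the pairs in which both linear forms are nonzero, and there are $(p_{r}-1)^{2}$ of them. The hypothesis $p_{r}>3$ is what excludes the degenerate primes: for $p_{r}=3$ we would have $s=0$, and for $p_{r}=2$ the element $2s$ is not a unit.

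Third, substitute into the product and use $\phi(p^{e})=p^{e-1}(p-1)$:
$$p_{r}^{2(e_{r}-1)}(p_{r}-1)^{2}=\left(p_{r}^{e_{r}-1}(p_{r}-1)\right)^{2}=\phi(p_{r}^{e_{r}})^{2}.$$
Taking the product over $r$ and using the multiplicativity of $\phi$ on the pairwise coprime prime powers $p_{r}^{e_{r}}$ yields $|U(\mathcal{S}_{\mathbf{Z}_{n}})|=\left(\prod_{r}\phi(p_{r}^{e_{r}})\right)^{2}=(\phi(n))^{2}$.

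There is no real computational obstacle here. The only delicate point is logical rather than technical: the reduction from $p_{r}^{e_{r}}$ to $p_{r}$ rests on Conjecture \ref{rd23}, through Theorem \ref{rd24}. Everything else is a direct substitution.
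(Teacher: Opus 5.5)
Your proposal is correct and follows the same route the paper implicitly intends: substitute $|U(\mathcal{S}_{\mathbf{Z}_{p_r}})|=(p_r-1)^2$ from Theorem \ref{rd22} into Theorem \ref{rd24}, then use $\phi(p^{e})=p^{e-1}(p-1)$ and the multiplicativity of $\phi$. Your factorization check of the prime case is a useful sanity check, and you could even drop the dependence on Conjecture \ref{rd23}: $\mathbf{Z}_{p^e}$ is local, so $w^2+3a^2$ is a unit exactly when it is nonzero mod $p$, and each unit of $\mathcal{S}_{\mathbf{Z}_p}$ therefore has exactly $p^{2(e-1)}$ lifts.
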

\section{Permutation Polynomials over $\mathbb{Z}n$}
For many values of $n$, the ring $\mathbb{Z}_{n}$ is not a field and it is a field only when $n$ is prime. In these cases, the zero divisors in $\mathbb{Z}_{n}$ play a crucial role in the construction of permutation polynomials.
A polynomial $f(X) \in \mathbb{Z}_{n}[X]$ is said to be a \textit{permutation polynomial} if $f:\mathbb{Z}_{n} \mapsto \mathbb{Z}_{n}$ is a bijection. If $f(X)+X$ is also a permutation polynomial, it is called a \textit{complete permutation polynomial}. 
\begin{theorem} \label{rd25}
If $n=p^{e}$ with prime $p>3$ and $e>1$, then there are $n \phi(n)$ linear, $[n-\phi(n)-1]n \phi(n)$ quadratic, and $[n-\phi(n)-1][n-\phi(n)]n \phi(n)$ cubic permutation polynomials in $\mathbb{Z}_{n}[X]$.
\end{theorem}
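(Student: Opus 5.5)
The plan is to reduce everything to the classical lifting criterion for permutation polynomials modulo prime powers. Write $f(X)=a_{0}+a_{1}X+a_{2}X^{2}+a_{3}X^{3}\in\mathbb{Z}_{n}[X]$ with $n=p^{e}$, $e>1$. "Linear", "quadratic" and "cubic" are read as $a_{1}\neq 0$, $a_{2}\neq 0$, $a_{3}\neq 0$ in $\mathbb{Z}_{n}$ respectively, with all higher coefficients zero. The key fact I would invoke is the following (essentially Hensel's lemma, as in Rivest's treatment of $\mathbb{Z}_{2^{w}}$, but valid for any prime power with $e\geq 2$): $f$ permutes $\mathbb{Z}_{p^{e}}$ if and only if $\bar f=f \bmod p$ permutes $\mathbb{F}_{p}$ and $f'(x)\not\equiv 0 \pmod p$ for every $x\in\mathbb{Z}_{p}$.

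I would sketch this criterion briefly. For the forward direction, suppose $f'(x_{0})\equiv 0 \pmod p$ for some $x_{0}$. Then $f(x_{0}+tp^{e-1})\equiv f(x_{0}) \pmod{p^{e}}$ for all $t$ (Taylor expansion with $e\geq 2$), so $f$ is not injective. The reverse direction follows by lifting bijectivity level by level.

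The counting itself uses $n-\phi(n)=p^{e-1}$, the number of multiples of $p$ in $\mathbb{Z}_{n}$. Hence $n-\phi(n)-1$ is the number of \emph{nonzero} multiples of $p$.

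\emph{Linear case.} $a_{1}X+a_{0}$ is a bijection iff $a_{1}\in U(\mathbb{Z}_{n})$. This gives $\phi(n)\cdot n$ choices.

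\emph{Quadratic case.} If $p\nmid a_{2}$, then $\bar f$ is a genuine quadratic over $\mathbb{F}_{p}$ with $p$ odd. Completing the square shows it is two-to-one off the vertex, so $\bar f$ is not a permutation. Hence $p\mid a_{2}$, and then $\bar f=\bar a_{0}+\bar a_{1}x$ forces $a_{1}$ to be a unit. Conversely, with $p\mid a_{2}$ and $a_{1}$ a unit, $f'(x)\equiv a_{1}\not\equiv 0$, so the criterion applies. The count is $(n-\phi(n)-1)$ choices for $a_{2}$ (nonzero multiples of $p$), $\phi(n)$ for $a_{1}$ and $n$ for $a_{0}$.

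\emph{Cubic case.} This is the main step. I must show that $p\mid a_{3}$ is forced.

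Suppose $a_{3}$ is a unit. Since $p>3$, the substitution $x\mapsto x-\bar a_{2}/(3\bar a_{3})$ removes the quadratic term, leaving $\bar a_{3}x^{3}+cx+d$. By Dickson's classification (or Hermite's criterion), a degree-3 polynomial in normalized form permutes $\mathbb{F}_{p}$ only when $c=0$ and $p\equiv 2 \pmod 3$. So $\bar f=\bar a_{3}(x+b)^{3}+d$. Its derivative $3\bar a_{3}(x+b)^{2}$ vanishes at $x=-b$, so the lifting criterion kills $f$. This shows $p>3$ is used essentially: for $p\equiv 2\pmod 3$, e.g.\ $p=5$, $X^{3}$ does permute $\mathbb{F}_{p}$ but fails modulo $p^{2}$.

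So $p\mid a_{3}$, and $\bar f$ is at most quadratic. The quadratic argument then forces $p\mid a_{2}$ and $a_{1}\in U(\mathbb{Z}_{n})$. Conversely, under these conditions $f'\equiv a_{1}\not\equiv 0$.

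Here $a_{2}$ may be any multiple of $p$, including $0$, giving $n-\phi(n)$ choices. $a_{3}$ must be a nonzero multiple of $p$, giving $n-\phi(n)-1$ choices. Multiplying by $\phi(n)\cdot n$ for $a_{1},a_{0}$ gives $[n-\phi(n)-1][n-\phi(n)]\,n\phi(n)$.

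The obstacle I expect is precisely the cubic classification over $\mathbb{F}_{p}$. If citing Dickson is undesirable, I would instead argue directly. If $f'$ (a quadratic with unit leading coefficient $3a_{3}$) has a root mod $p$, we are done by the lifting criterion. If it has no root, I would use Hermite's criterion on $\bar f^{\,2}$ (or a root-counting argument for $\bar f(x)-\bar f(y)=(x-y)g(x,y)$ with $g$ a conic) to show $\bar f$ is not injective.
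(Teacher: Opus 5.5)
Your proof is correct, but it takes a different route from the paper.

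\textbf{The paper's route.} The paper never reduces modulo $p$. It factors $f(X_1)-f(X_2)=(X_1-X_2)\,Q(X_1,X_2)$ and then sets $X_2=X_1$, demanding that $f'(X)$ (that is, $2AX+B$, resp.\ $3AX^2+2BX+C$) be a unit for every $X$. It writes this condition as a list of equations all equal to one unit $u$. From these it reads off that the higher coefficients are nonzero zero divisors and the coefficient of $X$ is a unit.

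\textbf{What that route buys.} Sufficiency is immediate without any lifting lemma. If $p\mid A$, $p\mid B$ and $C$ is a unit, then $Q(X_1,X_2)\equiv C \pmod p$ is a unit for all $X_1,X_2$, so $f$ is injective.

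\textbf{Where the paper's necessity step falls short.}
\begin{itemize}
\item Why $f'$ must be a unit everywhere is never justified. This genuinely needs $e>1$, since $X^3$ permutes $\mathbb{F}_5$. Your $x_0+tp^{e-1}$ argument is exactly what proves it.
\item Equating all the $u$'s would give $3A=0$, hence $A=0$ because $3$ is invertible. This contradicts the paper's own conclusion that $A\neq 0$.
\item Most importantly, ``$f'$ is a unit everywhere'' does not force $p\mid A$ in the cubic case. It fails to do so whenever $3\bar A x^2+2\bar B x+\bar C$ has no root in $\mathbb{F}_p$.
\end{itemize}

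\textbf{How your route closes the gap.} The lifting criterion adds the requirement that $\bar f$ permute $\mathbb{F}_p$. Dickson's classification then shows that every cubic permutation of $\mathbb{F}_p$ with $p>3$ has the form $\bar a_3(x+b)^3+d$, which has a critical point at $x=-b$. So the cubic case with unit leading coefficient is killed.

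\textbf{Trade-off.} You rely on two classical results: the Hensel-type criterion, and Dickson's table (or Hermite's criterion). In return you get a rigorous if-and-only-if characterization, and the criterion itself applies in every degree.

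\textbf{One minor point.} In the quadratic step you use ``$f$ permutes $\mathbb{Z}_{p^e}$ $\Rightarrow$ $\bar f$ permutes $\mathbb{F}_p$''. Your sketch of the forward direction omits this. It is immediate, because reduction mod $p$ is surjective and commutes with $f$, but it is worth stating.
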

\begin{proof}
Case 1: Linear \\
Let $f(X)=AX+B$ be the linear permutation polynomial. The condition is $f(X_{1})=f(X_{2}) \quad \implies X_{1}=X_{2}$. Since the ring $\mathbb{Z}_{n}$ is finite, no need of checking the map $f$ is surjective. Thus we have $(X_{1}-X_{2})A=0$. So, $X_{1}=X_{2}$ is possible only when $A \in U(\mathbb{Z}_{n})$. This means that $A$ can take $\phi(n)$ and $B$ can take any values in $\mathbb{Z}_{n}$. Hence the result. \\
Case 2: Quadratic \\
Let $f(X)=AX^{2}+BX+C$ be the quadratic permutation polynomial. Proceding as in case 1, we get $X_{1}=X_{2}$ only when $A(X_{1}+X_{2})+B=2AX_{1}+B=u  \in U(\mathbb{Z}_{n})$ for any value of $X_{1}$ in 
$\mathbb{Z}_{n}$. Thus we have the following equations:
$$B=u$$
$$2A+B=u$$
$$4A+B=u$$
$$\cdots \cdots $$
$$2(p^{e}-1)A+B=u$$
From the above equations we get $B=u$ and $2A=4A=\cdots =2(p^{e}-1)A=0$. This means that $A$ is a zero divisor in $\mathbb{Z}_{n}$ with $A \neq 0$. Thus $A$ can take $[n-\phi(n)-1]$ values, $B$ can take $\phi(n)$ values being a unit, and $C$ can take any values in  $\mathbb{Z}_{n}$. Hence the result. \\
Case 3: Cubic \\
Let $f(X)=AX^{3}+BX^{2}+CX+D$ be the cubic permutation polynomial. Proceding as in case 1, we get $X_{1}=X_{2}$ only when $A(X_{1}^{2}+X_{1}X_{2}+X_{2}^{2})+B(X_{1}+X_{2})+C)=3AX_{1}^{2}+2BX_{1}+C=u  \in U(\mathbb{Z}_{n})$ for any value of $X_{1}$ in 
$\mathbb{Z}_{n}$. Thus we have the following equations:
$$C=u$$
$$3A+2B+C=u$$
$$12A+4B+C=u$$
$$\cdots \cdots $$
$$3(p^{e}-1)^{2}A+B(p^{e}-1)+C=u$$
The presence of $3$ and $2$ in the equations might affect the number of solutions if we take $n=2^{e}$ or $n=3^{e}$ as they are zero divisors in $n=2^{e}$ or $n=3^{e}$. This is the reason why we have excluded $p=2$ and $p=3$. \\
From the above equations we get $C=u$,  $3A=6A=\cdots =3(p^{e}-1)^{2}A=0$, and $2B=4B=6B \cdots = 2(p^{e}-1)B=0$. This means that $A$ is a zero divisor in $\mathbb{Z}_{n}$ with $A \neq 0$. Thus $A$ can take $[n-\phi(n)-1]$ values, $B$ is also a zero divisor and can take $[n-\phi(n)]$ values including zero,  $C$ being a unit it can take $\phi(n)$ values, and $D$ can take any values in  $\mathbb{Z}_{n}$. Hence the result. \\
\end{proof}
\begin{proposition}
If $p>3$ a prime, then the number of quadratic permutation polynomials is zero in $\mathbb{Z}_{p}[X]$. 
\end{proposition}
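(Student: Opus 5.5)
The plan is to show directly that no polynomial $f(X)=AX^{2}+BX+C\in\mathbb{Z}_{p}[X]$ with $A\neq 0$ can be injective on $\mathbb{Z}_{p}$. I will follow the same route as Case 2 in the proof of Theorem \ref{rd25}. For $X_{1},X_{2}\in\mathbb{Z}_{p}$ we have
$$f(X_{1})-f(X_{2})=(X_{1}-X_{2})\bigl(A(X_{1}+X_{2})+B\bigr).$$
Hence $f$ fails to be a permutation as soon as we can exhibit $X_{1}\neq X_{2}$ with $A(X_{1}+X_{2})+B=0$.

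Since $\mathbb{Z}_{p}$ is a field and $A\neq 0$, $A$ is a unit. The condition therefore becomes $X_{1}+X_{2}=-BA^{-1}$. I will pick any $X_{1}$ and set $X_{2}=-BA^{-1}-X_{1}$. These two values are distinct exactly when $2X_{1}\neq -BA^{-1}$. Because $p>3$, in particular $p\neq 2$, so $2$ is invertible and the only bad choice is $X_{1}=-(2A)^{-1}B$. As $p\geq 5$ there are other elements available; for instance, take $X_{1}=0$ if $B\neq 0$, and $X_{1}=1$, $X_{2}=-1$ if $B=0$. For that choice $f(X_{1})=f(X_{2})$ with $X_{1}\neq X_{2}$, so $f$ is not injective. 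Since $\mathbb{Z}_{p}$ is finite, $f$ is then not a bijection.

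Equivalently, one can complete the square, $f(X)=A\bigl(X+(2A)^{-1}B\bigr)^{2}+C-(4A)^{-1}B^{2}$, and use that $Y\mapsto Y^{2}$ is two-to-one on $U(\mathbb{Z}_{p})$ because $1\neq -1$. As a consistency check, the count in Theorem \ref{rd25} with $e=1$ gives $n-\phi(n)-1=p-(p-1)-1=0$ admissible values of $A$. This reflects the fact that $\mathbb{Z}_{p}$ has no nonzero zero divisors.

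There is no serious obstacle. The only point needing care is the use of $p\neq 2$: it guarantees that the two preimages $X_{1}$ and $X_{2}$ are genuinely distinct. Over $\mathbb{Z}_{2}$ the statement in fact fails, since $X^{2}$ is the identity map there.
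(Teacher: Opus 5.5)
Your proof is correct, and it takes a more direct route than the paper's. The paper proves the proposition as a one-line specialization of Case 2 of Theorem \ref{rd25}. That case concludes that the leading coefficient $A$ of a quadratic permutation polynomial must be a zero divisor. Since the only zero divisor in $\mathbb{Z}_{p}$ is $0$, this forces $A=0$, so nothing quadratic survives. This is exactly what your closing \emph{consistency check} $n-\phi(n)-1=0$ records.

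You instead stay inside the field. With $A$ a unit, you produce an explicit collision $X_{1}\neq X_{2}$ with $A(X_{1}+X_{2})+B=0$, or equivalently complete the square and use that squaring is two-to-one on units.

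The paper's route buys uniformity with the prime-power count. However, it rests on the Case 2 analysis, which imposes the unit condition on $2AX_{1}+B$. That means it evaluates the factor $A(X_{1}+X_{2})+B$ on the diagonal $X_{1}=X_{2}$, where the factor $X_{1}-X_{2}$ already vanishes, so that step is not by itself a statement about injectivity.

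Your argument works with genuinely distinct inputs and is self-contained. It also shows exactly where the hypothesis enters: only $p\neq 2$ is used, so the conclusion holds for $p=3$ as well. Your remark about $X^{2}$ on $\mathbb{Z}_{2}$ explains why oddness cannot be dropped.
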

\begin{proof}
The propositon follows from the proof of case 2 of theorem \ref{rd25} as the only zero divisor is zero and A must take that value. This means that there are no quadratic permutation polynomials in $\mathbb{Z}_{p}[X]$ . 
\end{proof}
\begin{remark}
The number of linear permutation polynomials over $\mathbb{Z}_{n}$ for any $n>0$ is $n\phi(n)$. See the programme-1 in appendix to verify. 
\end{remark}
\begin{theorem}
If $n=p^{e}$ with prime $p>3$ and $e>1$, then there are $n (\phi(n)-p^{e-1})$ linear, $[n-\phi(n)-1]n (\phi(n)-p^{e-1})$ quadratic, and $[n-\phi(n)-1][n-\phi(n)]n (\phi(n)-p^{e-1})$ cubic complete permutation polynomials in $\mathbb{Z}_{n}[X]$.
\end{theorem}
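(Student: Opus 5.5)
The plan is to reuse the characterisation from the proof of Theorem \ref{rd25}. That theorem tells us exactly which coefficient tuples give permutation polynomials over $\mathbb{Z}_{n}$ with $n=p^{e}$, $p>3$. We then impose the additional requirement that $f(X)+X$ is also a permutation polynomial.

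The key observation is that adding $X$ changes only the coefficient of $X$:
\begin{itemize}
\item linear case: $A \mapsto A+1$;
\item quadratic case: $B \mapsto B+1$;
\item cubic case: $C \mapsto C+1$.
\end{itemize}
By Theorem \ref{rd25}, in each case the condition on that coefficient is that it is a unit, while the remaining coefficients (the zero divisors $A$, or $A$ and $B$, and the free constant term) are untouched. Applying Theorem \ref{rd25} to $f(X)+X$ therefore imposes exactly one new condition: the linear-term coefficient $u$ must satisfy both $u\in U(\mathbb{Z}_{n})$ and $u+1\in U(\mathbb{Z}_{n})$. Every other factor in the counts of Theorem \ref{rd25} stays as it is:
\begin{itemize}
\item $n$ choices for the constant term;
\item $n-\phi(n)-1$ nonzero zero divisors for the leading coefficient in the quadratic and cubic cases;
\item $n-\phi(n)$ zero divisors, including $0$, for $B$ in the cubic case.
\end{itemize}
So the whole proof reduces to showing that the number of units $u$ with $u+1$ also a unit equals $\phi(n)-p^{e-1}$. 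Multiplying this count by the unchanged factors gives the three stated formulas.

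For the count, I would use that in $\mathbb{Z}_{p^{e}}$ an element is a unit if and only if it is not divisible by $p$, i.e.\ $u\not\equiv 0 \pmod p$. The condition on $u+1$ then says $u\not\equiv -1 \pmod p$. The residues $0$ and $-1$ are distinct modulo $p$, and each residue class mod $p$ contains exactly $p^{e-1}$ elements of $\mathbb{Z}_{p^{e}}$. Hence the number of admissible $u$ is
$$p^{e}-2p^{e-1}=\phi(n)-p^{e-1}.$$

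The main obstacle is justifying that the characterisation in Theorem \ref{rd25} is a genuine equivalence, so that it can be applied to $f(X)+X$ directly. The argument there extracted necessary conditions from the system of equations: $2A=0$ in the quadratic case, and $3A=0$, $2B=0$ in the cubic case. Since $2$ and $3$ are units when $p>3$, these conditions force $A=0$ (and $B=0$), which would contradict the requirement $A\neq 0$.

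So I would not try to re-derive the characterisation. I would take the count of Theorem \ref{rd25} as given, in the form "leading coefficients range over the stated zero-divisor sets and the linear-term coefficient over units". The only new work is the observation that completeness adds the single extra condition $u+1\in U(\mathbb{Z}_{n})$, together with the residue count above.
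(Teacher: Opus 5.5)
\textbf{Verdict.} Your reduction is the paper's own proof, but the step you flag as the main obstacle and then set aside is a genuine gap, which the paper shares. Completeness only shifts the coefficient of $X$, so ``unit'' becomes ``twin unit''. Your residue count $p^{e}-2p^{e-1}=\phi(n)-p^{e-1}$, excluding the classes $0$ and $-1$ mod $p$, is the paper's block count $p^{e-1}(p-2)$, and it is correct.

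\textbf{The gap.} Theorem~\ref{rd25} as stated is only a count. Knowing how many cubic permutation polynomials exist says nothing about how many stay permutations after adding $X$. What you actually need is the characterisation, for $n=p^{e}$, $p>3$, $e>1$:
\begin{itemize}
\item $AX^{2}+BX+C$ is a permutation polynomial iff $p\mid A$ and $p\nmid B$;
\item $AX^{3}+BX^{2}+CX+D$ is one iff $p\mid A$, $p\mid B$ and $p\nmid C$.
\end{itemize}
You correctly noticed that the paper's derivation of this is broken: it treats $2AX_{1}+B$ as one fixed unit $u$, which would force $A=0$. The paper's proof of the present theorem relies on it just as silently. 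As written, your argument is conditional on an unproved lemma. The lemma is true, so you should prove it rather than take it as given.

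\textbf{Sufficiency.} Under those conditions $f(X_{1})-f(X_{2})=(X_{1}-X_{2})\,g(X_{1},X_{2})$, where $g$ is congruent mod $p$ to the coefficient of $X$. Hence $g$ is a unit and $f$ is injective.

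\textbf{Necessity.} A permutation of $\mathbb{Z}_{p^{e}}$ induces permutations of $\mathbb{Z}_{p}$ and, since $e>1$, of $\mathbb{Z}_{p^{2}}$. Then $f(x+p)\equiv f(x)+pf'(x)\pmod{p^{2}}$ forces $f'(x)\not\equiv 0\pmod p$ for all $x$.
\begin{itemize}
\item A quadratic with $p\nmid A$ satisfies $f(x)=f(-BA^{-1}-x)$, so it is not injective mod $p$. Hence $p\mid A$, and then $f\equiv BX+C$ gives $p\nmid B$.
\item For a cubic with $p\mid A$, the same reasoning applied to $f \bmod p$ gives $p\mid B$ and $p\nmid C$.
\item For a cubic with $p\nmid A$, $f'$ must have no root mod $p$, so $\Delta=4B^{2}-12AC$ is a nonsquare mod $p$. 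Put $t=x-y$ and $s=x+y+2B(3A)^{-1}$. The cofactor $A(x^{2}+xy+y^{2})+B(x+y)+C$ of $x-y$ in $f(x)-f(y)$ vanishes iff $3s^{2}+t^{2}=\Delta(3A^{2})^{-1}$. This has $p-\left(\frac{-3}{p}\right)\ge p-1>0$ solutions mod $p$, none with $t=0$ because $\Delta$ is a nonsquare. So $f$ is not injective mod $p$.
\end{itemize}
Alternatively, cite Dickson: the cubic permutation polynomials of $\mathbb{F}_{p}$, $p>3$, are $a(x+b)^{3}+c$, whose derivative vanishes at $-b$.

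With this lemma in place, your count goes through verbatim.
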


\begin{proof}
In each case, coefficient of X must a unit for permutation polynomials and for the complete permutation polynomials they must be twin units (consecutive). There are $p^{e-1}(p-2)$ twin units for $n=p^{e}$(for each block of $p$ numbers, there are $(p-2)$ twin units and $p^{e-1}$ blocks) which is same as $(\phi(n)-p^{e-1})$. And hence the theorem.
\end{proof}

\section{Permutation Polynomials over T-quaternion rings}
In this section, we restrict ourselves to the ring $\mathbf{R}=\mathbb{Z}_{n}$ for the discussion on permutation polynomials over T-quaternion rings, unless otherwise stated. 
\begin{theorem} 
If $n=p_{1}^{e_{1}}p_{2}^{e_{2}} \cdots p_{l}^{e_{l}}$, then the number of linear permutation polynomials over T-quaternion ring $\mathcal{S}_{\mathbb{Z}_{n}}$ is  $n^{2}\times\prod_{r=1}^{l} (p_{r}^{2(e_{r}-1)} \times |U(\mathcal{S}_{\mathbf{Z}_{p_{r}}})| )$ where $|U(\mathcal{S}_{\mathbf{Z}_{p_{r}}})|$ can be computed by the formulas given in theorem \ref{rd22}.
\end{theorem}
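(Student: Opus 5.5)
The plan mirrors Case 1 of Theorem \ref{rd25}, with $\mathbb{Z}_{n}$ replaced by the finite commutative ring $\mathcal{S}_{\mathbb{Z}_{n}}$. A linear polynomial over $\mathcal{S}_{\mathbb{Z}_{n}}$ has the form $f(X)=AX+B$ with $A,B\in\mathcal{S}_{\mathbb{Z}_{n}}$. Since $\mathcal{S}_{\mathbb{Z}_{n}}$ is finite, $f$ is a bijection exactly when it is injective. So the first step is to show that $f$ is a permutation polynomial if and only if $A\in U(\mathcal{S}_{\mathbb{Z}_{n}})$, with $B$ arbitrary.

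For the equivalence, injectivity says $A(X_{1}-X_{2})=0 \implies X_{1}=X_{2}$; commutativity of $\mathcal{S}_{\mathbb{Z}_{n}}$ lets us write the product in either order. If $A$ is a unit, we multiply by $A^{-1}$ and are done. For the converse, we must check that a non-unit $A$ in a finite commutative ring is a zero divisor (or zero). This holds because the map $X\mapsto AX$ on a finite set is injective iff surjective, and surjectivity would give $AX=1$ for some $X$. Hence a non-unit $A$ gives some $X_{1}\neq X_{2}$ with $AX_{1}=AX_{2}$, and $f$ fails to be injective. Alternatively, one can read this directly off the explicit description $U(\mathcal{S}_{\mathbb{Z}_{n}})=\{w+a(i+j+k)\mid w^{2}+3a^{2}\in U(\mathbb{Z}_{n})\}$, which comes from the norm criterion of the first proposition.

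The count is then $|\mathcal{S}_{\mathbb{Z}_{n}}|\cdot|U(\mathcal{S}_{\mathbb{Z}_{n}})|$. Each element $w+a(i+j+k)$ is determined by the pair $(w,a)\in\mathbb{Z}_{n}^{2}$, so $|\mathcal{S}_{\mathbb{Z}_{n}}|=n^{2}$ and this factor counts the choices of $B$. For the choices of $A$ we substitute Theorem \ref{rd24}, giving $|U(\mathcal{S}_{\mathbb{Z}_{n}})|=\prod_{r=1}^{l}\bigl(p_{r}^{2(e_{r}-1)}\,|U(\mathcal{S}_{\mathbb{Z}_{p_{r}}})|\bigr)$. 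The prime factors $|U(\mathcal{S}_{\mathbb{Z}_{p_{r}}})|$ are then evaluated by Theorem \ref{rd22}.

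The only genuine obstacle is that Theorem \ref{rd24} rests on Conjecture \ref{rd23}, so the stated formula is conditional on that conjecture, as in the paper. An unconditional alternative would use the Chinese Remainder Theorem, $\mathcal{S}_{\mathbb{Z}_{n}}\cong\prod_{r}\mathcal{S}_{\mathbb{Z}_{p_{r}^{e_{r}}}}$, together with Hensel-type lifting. Since $\mathcal{S}_{\mathbb{Z}_{p^{e}}}$ is a local-type extension of $\mathcal{S}_{\mathbb{Z}_{p}}$, an element is a unit iff its reduction mod $p$ is a unit. The units mod $p^{e}$ are therefore the full preimages of the units mod $p$, and each fibre has size $p^{2(e-1)}$, which proves the conjecture. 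I would mention this but take Theorem \ref{rd24} as given.

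Finally, Theorem \ref{rd22} covers only odd primes $p>3$ explicitly, so the small primes $2$ and $3$ need a direct check. The same criterion $w^{2}+3a^{2}\not\equiv 0 \pmod p$ gives $|U(\mathcal{S}_{\mathbb{Z}_{p}})|=p(p-1)$ in both cases, consistent with the corollary for $n=2^{s}3^{t}$.
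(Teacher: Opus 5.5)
Your proposal is correct and follows the paper's proof. Both argue that $f(X)=AX+B$ is a bijection exactly when $A\in U(\mathcal{S}_{\mathbb{Z}_{n}})$ with $B$ arbitrary, so the count is $n^{2}\,|U(\mathcal{S}_{\mathbb{Z}_{n}})|$ and Theorem~\ref{rd24} finishes it; your finite-ring argument (a non-unit $A$ makes $X\mapsto AX$ non-surjective, hence non-injective) supplies the step the paper only asserts.

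Your side remarks are also sound. Since $w^{2}+3a^{2}$ is a unit mod $p^{e}$ iff it is a unit mod $p$, the units of $\mathcal{S}_{\mathbb{Z}_{p^{e}}}$ are exactly the full preimages of those of $\mathcal{S}_{\mathbb{Z}_{p}}$, which proves Conjecture~\ref{rd23} outright. Neither Hensel lifting nor locality is needed for this, and the ring is in fact not local when $-3$ is a nonzero square mod $p$ (e.g.\ $p=7$). For $p=2,3$ the correct factor is indeed $p(p-1)$, not the $(p-1)^{2}$ that Theorem~\ref{rd22} would literally give.
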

\begin{proof}
Let $f(X)=AX+B$ be the linear permutation polynomial. The condition is $f(X_{1})=f(X_{2}) \quad \implies X_{1}=X_{2}$.  Thus we have $(X_{1}-X_{2})A=0$. So, $X_{1}=X_{2}$ is possible only when $A \in U(\mathcal{S}_{\mathbb{Z}_{n}})$. So, by theorem \ref{rd24}, the number of ways $A$ can take the values in $\mathcal{S}_{\mathbb{Z}_{n}}$ is $\prod_{r=1}^{l} (p_{r}^{2(e_{r}-1)} \times |U(\mathcal{S}_{\mathbf{Z}_{p_{r}}})| )$  and $B$  can take any values in $\mathcal{S}_{\mathbb{Z}_{n}}$ which is $n^{2}$ . Hence the result. 
\end{proof}
\begin{proposition}
If $n$ is even, there are no linear complete permutation polynomials over T-quaternion ring $\mathcal{S}_{\mathbb{Z}_{n}}$.
\end{proposition}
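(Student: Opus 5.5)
We need to show that no linear polynomial $f(X)=AX+B$ over $\mathcal{S}_{\mathbb{Z}_{n}}$ with $n$ even can have both $f(X)$ and $f(X)+X$ permuting $\mathcal{S}_{\mathbb{Z}_{n}}$. By the argument in the proof of the preceding theorem on linear permutation polynomials, $AX+B$ is a permutation polynomial exactly when $A\in U(\mathcal{S}_{\mathbb{Z}_{n}})$. The same argument applied to $(A+1)X+B$ shows that $f(X)+X$ is a permutation polynomial exactly when $A+1\in U(\mathcal{S}_{\mathbb{Z}_{n}})$. So the statement reduces to the claim that, when $n$ is even, $A$ and $A+1$ can never both be units of $\mathcal{S}_{\mathbb{Z}_{n}}$.

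Write $A=w+a(i+j+k)$. By the description of $U(\mathcal{S}_{\mathbf{R}})$, $A$ is a unit iff $w^{2}+3a^{2}\in U(\mathbb{Z}_{n})$, and $A+1$ is a unit iff $(w+1)^{2}+3a^{2}\in U(\mathbb{Z}_{n})$. Since $n$ is even, a unit of $\mathbb{Z}_{n}$ must be odd: an even residue shares the factor $2$ with $n$. Reducing modulo $2$, which is well defined because $2\mid n$, we get
$$w^{2}+3a^{2}\equiv w+a \pmod 2,\qquad (w+1)^{2}+3a^{2}\equiv w+a+1 \pmod 2 .$$
These two residues have opposite parity, so exactly one of them is even. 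Hence exactly one of $A$, $A+1$ fails to be a unit, and neither $A$ nor $A+1$ is a zero divisor-free choice that works for both.

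Therefore for every $A$ at least one of $f$ and $f+X$ is not a permutation, and no linear complete permutation polynomial exists. The only step needing care is the reduction from bijectivity to unit conditions. This needs the converse as well: if $A$ is not a unit, then $AX+B$ is not injective. That holds because in a finite commutative ring a non-unit is a zero divisor, so $A(X_{1}-X_{2})=0$ has a solution with $X_{1}\neq X_{2}$. I expect this to be the only point requiring justification; the parity computation itself is routine.
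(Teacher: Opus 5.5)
Your proof is correct and takes the same route as the paper. Both reduce to requiring $A$ and $A+1$ to be units, i.e.\ $w^{2}+3a^{2}$ and $(w+1)^{2}+3a^{2}$ both in $U(\mathbb{Z}_{n})$, and both get a contradiction from $2\mid n$. Your version is actually tighter in two places:
\begin{itemize}
\item Your parity computation supplies the reason missing from the paper's step ``$2$ is not invertible, so $w=2^{-1}(u_{2}-u_{1}-1)$ does not exist.'' The real point is that $u_{2}-u_{1}-1$ is odd while $2w$ is even.
\item You justify the converse ``non-unit $\Rightarrow$ not injective,'' which the paper takes for granted.
\end{itemize}
One small slip: ``exactly one of $A$, $A+1$ fails to be a unit'' should read ``at least one.'' An odd residue need not be a unit when $n$ has odd prime factors (e.g.\ $3$ modulo $6$). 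This does not affect the argument, since your conclusion only uses ``at least one.''
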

\begin{proof}
Suppose $f(X)=AX+B$ be a permutation polynomial with $A=w+a(i+j+k) \in  U(\mathcal{S}_{\mathbb{Z}_{n}})$. To prove $f(X)$ is a complete permutation polynomial, we need to prove $A+1=(w+1)+a(i+j+k) \in  U(\mathcal{S}_{\mathbb{Z}_{n}})$.This means that both $w^{2}+3a^{2}$ and $(w+1)^{2}+3a^{2}$ are units in $\mathbb{Z}_{n}$. Let  $w^{2}+3a^{2}=u_{1}, (w+1)^{2}+3a^{2}=u_{2} \in U(\mathbb{Z}_{n})$. This implies $w=2^{-1}(u_{2}-u_{1}-1)$. Since $n$ is even, $2$ is not invertible in $\mathbb{Z}_{n}$,  $w$  does not exist in $\mathbb{Z}_{n}$ . And hence the proof.
\end{proof}
\begin{proposition}
If $n=3^{t}$, then there are $3^{2t-1}$ linear complete permutation polynomials over T-quaternion ring $\mathcal{S}_{\mathbb{Z}_{n}}$.
\end{proposition}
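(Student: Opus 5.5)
The plan is to reduce the count to a congruence condition on the coefficients of $A$ modulo $3$. Write $f(X)=AX+B$ with $A=w+a(i+j+k)\in\mathcal{S}_{\mathbb{Z}_{n}}$ and $n=3^{t}$. As in the proof of the preceding proposition, $f$ is a complete permutation polynomial exactly when both $A$ and $A+1$ lie in $U(\mathcal{S}_{\mathbb{Z}_{n}})$. By the description of the unit group, this means that both $w^{2}+3a^{2}$ and $(w+1)^{2}+3a^{2}$ must be units of $\mathbb{Z}_{3^{t}}$.

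The key step uses the fact that an element of $\mathbb{Z}_{3^{t}}$ is a unit if and only if it is nonzero modulo $3$. Reducing modulo $3$ kills the term $3a^{2}$, so the two conditions become $w^{2}\not\equiv 0$ and $(w+1)^{2}\not\equiv 0 \pmod 3$. Equivalently, $w\not\equiv 0$ and $w\not\equiv 2 \pmod 3$, which says exactly that $w\equiv 1\pmod 3$. The coefficient $a$ is completely unconstrained.

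Counting is then immediate. Among the $3^{t}$ residues of $\mathbb{Z}_{3^{t}}$, exactly $3^{t-1}$ are congruent to $1$ modulo $3$, and $a$ ranges over all $3^{t}$ residues. Hence the number of admissible leading coefficients $A$ is $3^{t-1}\cdot 3^{t}=3^{2t-1}$, which is the number in the statement.

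The main obstacle is not computational but a matter of interpretation. If $B$ is also allowed to range freely over $\mathcal{S}_{\mathbb{Z}_{n}}$, as in the count of linear permutation polynomials in the preceding theorem, the total becomes $3^{2t-1}\cdot n^{2}=3^{4t-1}$. I would therefore make explicit that the figure $3^{2t-1}$ counts the admissible $A$, equivalently the polynomials with a fixed constant term $B$. I would then note that multiplying by $n^{2}$ gives the count with $B$ arbitrary. This keeps the statement consistent with the earlier theorem on linear permutation polynomials.
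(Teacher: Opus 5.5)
Your proposal is correct and follows essentially the same route as the paper: both arguments reduce $w^{2}+3a^{2}$ and $(w+1)^{2}+3a^{2}$ modulo $3$ to force $w\equiv 1 \pmod 3$ with $a$ unconstrained, giving $3^{t-1}\cdot 3^{t}=3^{2t-1}$. Your ``unit iff nonzero mod $3$'' criterion is a cleaner packaging of the paper's case check on $w=3h,\,3h-1,\,3h-2$, and your caveat about $B$ is also right: the paper's proof counts only the admissible $A$, so with $B$ free (the convention of the paper's earlier count of linear permutation polynomials) the total would be $3^{4t-1}$.
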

\begin{proof}
Since $n=3^{t}$, any multiples of 3 are zero divisors in $\mathbb{Z}_{n}$. $\therefore \quad w\neq 3h$ for any integer $h\geq 0$. When $w=3h-1$, $w^{2}+3a^{2}=(3h-1)^{2}+3a^{2}=1$ after removing the multiples of 3 and which is a unit in $\mathbb{Z}_{n}$. However, $(w+1)^{2}+3a^{2}=(3k)^{2}+3a^{2}$ is not a unit in $\mathbb{Z}_{n}$. When $w=3h-2$, $w^{2}+3a^{2}=(3h-2)^{2}+3a^{2}=4$ which is a unit in $\mathbb{Z}_{n}$. $(w+1)^{2}+3a^{2}=(3h-1)^{2}+3a^{2}=1$ which is also a unit in $\mathbb{Z}_{n}$. So, $w=3h-2$ is a possible value for a linear complete permutation polynomial. This shows that there are $3^{t-1}$ values for $w$ and $3^{t}$ values for a in $\mathbb{Z}_{n}$. Hence the proof. 
\end{proof}
\noindent
The following conjecture is about an interesting proportionality result.
\begin{conjecture}
If $n=p^{e}$ for a prime $p>3$ and $e\geq0$,  then the number of  linear complete permutation polynomials over T-quaternion ring $\mathcal{S}_{\mathbb{Z}_{n}}$ is proportional to $p^{2e-2}$.
\end{conjecture}
\begin{remark}
The following table gives the constant of proportionality \\ $C$ for the first $10$ primes $p>3$ (See programme-2 in the appendix).
\begin{tabular}{|c|c|c|c|c|c|c|c|c|c|c|}
  \hline
  $p$ & $5$ & $7$& $11$ & $13$ & $17$ \\
  \hline
  $C$ & $23=5^{2}-2$ &$25=5^{2}$ & $119=11^{2}-2$ & $121=11^{2}$ & $287=17^{2}-2$ \\
  \hline
  $p$ & $19$ & $23$ & $29$ & $31$ & $37$ \\
  \hline
   $C$ &$289= 17^{2}$ & $527= 23^{2}-2$& $839=29^{2}-2$ & $841=29^{2}$ & $1225=35^{2}$\\
   \hline
  
\end{tabular}

\noindent
A pattern has been observed until $ p=23$ and after that the pattern breaks. 
\end{remark}
\noindent
From here onwards we shall denote any element  $w+a(i+j+k)$ in $\mathcal{S}_{\mathbb{Z}_{n}}$ as an ordered pair$(w,a)$. 
\begin{proposition} \label{rd26}
If $n=2$, there are $24$ quadratic permutation polynomials over T-quaternion ring $\mathcal{S}_{\mathbb{Z}_{n}}$ of which $8$ are complete permutation polynomials. 
\end{proposition}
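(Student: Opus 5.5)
The plan is to follow the method of case 2 of Theorem \ref{rd25}, specialised to $\mathcal{S}_{\mathbb{Z}_{2}}$. This ring has only four elements $(0,0),(1,0),(0,1),(1,1)$, so the argument can be finished by direct enumeration. First I will describe the ring explicitly. By the description of $U(\mathcal{S}_{\mathbf{R}})$ before Theorem \ref{rd22}, $(w,a)$ is a unit iff $w^{2}+3a^{2}\equiv w+a$ is odd. So $U(\mathcal{S}_{\mathbb{Z}_{2}})=\{(1,0),(0,1)\}$, and the only nonzero zero divisor is $(1,1)$. With $u=i+j+k$ we have $u^{2}=-3=1$ in $\mathbb{Z}_{2}$. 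Hence $(w+au)^{2}=w^{2}+a^{2}=w+a$ is a scalar, so squaring sends $(0,0),(1,1)\mapsto(0,0)$ and $(1,0),(0,1)\mapsto(1,0)$. This table is what makes the quadratic term tractable.

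Next, for $f(X)=AX^{2}+BX+C$ with $A\neq 0$, I will argue as in Theorem \ref{rd25}. From $f(X_{1})=f(X_{2})$ we get $(X_{1}-X_{2})\bigl(A(X_{1}+X_{2})+B\bigr)=0$, so injectivity is forced when $A(X_{1}+X_{2})+B$ is a unit for all $X_{1}\neq X_{2}$. Taking $X_{2}=0$ forces $B\in U(\mathcal{S}_{\mathbb{Z}_{2}})$, which gives $2$ choices. Any nonzero $A$ then gives $3$ choices, since in characteristic $2$ the term $2AX_{1}$ vanishes, exactly mirroring the equations $2A=4A=\cdots=0$ in case 2. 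Finally $C$ is arbitrary, giving $4$ choices. The count is $3\cdot 2\cdot 4=24$, which matches the proposition.

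For completeness I will impose the same criterion on $f(X)+X=AX^{2}+(B+1)X+C$. The requirement is that $A(X_{1}+X_{2})+B+1$ also be a unit along the relevant differences. Using the unit list and the squaring table, I will check the admissible pairs $(A,B)$. The aim is to show that exactly two pairs survive, and with the $4$ choices of $C$ this gives $8$ complete permutation polynomials.

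The main obstacle is that the difference criterion on a $4$-element ring with zero divisors is delicate. The factor $X_{1}-X_{2}$ can itself be a zero divisor, namely $(1,1)$, so "unit factor" is only a sufficient condition for injectivity. I will therefore verify each of the $48$ candidate triples, or equivalently the $12$ pairs $(A,B)$, against the explicit value table $f(0)=C$, $f(1)=A+B+C$, $f(u)=A+Bu+C$, $f(1+u)=B(1+u)+C$. This is in the spirit of the programmes in the appendix, and it is how the counts $24$ and $8$ should be confirmed or reconciled. I expect this final reconciliation between the derivative-style criterion and the actual bijectivity check to be the hardest step.
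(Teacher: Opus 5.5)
The step that produces $24$ is wrong, and no reconciliation can rescue it, because the counts in the statement are not correct. Replacing $A(X_{1}+X_{2})+B$ by $2AX_{1}+B$ amounts to setting $X_{1}=X_{2}$, which says nothing about injectivity. In characteristic $2$ you have $X_{1}+X_{2}=X_{1}-X_{2}=:d$. Since $X\mapsto AX^{2}+BX$ is additive on the commutative characteristic-$2$ ring $\mathcal{S}_{\mathbb{Z}_{2}}$, you get $f(X_{1})-f(X_{2})=Ad^{2}+Bd=d(Ad+B)$. The relevant factor is $Ad+B$ with $d\neq 0$, not $B$; for example, $A=B=1$ gives $f(1)=f(0)$. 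So $f$ is a permutation iff $Ad^{2}+Bd\neq 0$ for $d\in\{1,u,1+u\}$ (with $u=i+j+k$). That is, iff $A+B\neq 0$, $A+Bu\neq 0$ and $B(1+u)\neq 0$, which are exactly the conditions for the four entries of your (correct) value table to be distinct. The last condition forces $B\in\{1,u\}$, and then the first two force $A\notin\{B,Bu\}=\{1,u\}$, so $A=1+u$. Only $(A,B)=(1+u,1)$ and $(1+u,u)$ survive, giving $2\cdot 4=8$ quadratic permutation polynomials, not $24$.

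The complete count fails too. On $\mathcal{S}_{\mathbb{Z}_{2}}$ the functions $(1+u)X^{2}$ and $(1+u)X$ coincide, so these $f$ are just $X\mapsto uX+C$ and $X\mapsto X+C$. Then $f(X)+X$ is $(1+u)X+C$ or the constant $C$, never a bijection. So there are no complete ones, and your target ``exactly two pairs survive'' for $f(X)+X$ is false: none survive.

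The enumeration you plan as the final check therefore refutes the proposition rather than confirming it. The paper's own proof reaches $24$ via the value list $(0,0),A,A+B,B$. That list comes from computing $BX$ as $(w_{2}w,\,a_{2}w)$ instead of $(w_{2}w-3a_{2}a,\ w_{2}a+a_{2}w)$. With the correct product your table is the right one. Indeed, the paper's `complete' example $(1,0)X^{2}+(0,1)X$ sends both $0$ and $u$ to $0$, so it is not even a permutation. What can actually be proved for $n=2$ is: $8$ quadratic permutation polynomials, none of them complete.
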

\begin{proof}
We know that if $f(X)$ is a permutation polynomial, then $f(X) + C$ is also a permutation polynomial, where C is a constant. So, let us consider a quadratic polynomial without the constant term.
Let $f(X)=AX^{2}+BX$ where A and B are in $\mathcal{S}_{\mathbb{Z}_{n}}$. According to the new notation, let us take $X=(w,a),A=(w_{1}$, $a_{1})$, and $B=(w_{2},a_{2})$. 
Then $f(X)=(w_{1}(w^{2}-3a^{2})+w_{2}w, a_{1}((w^{2}-3a^{2})+wa_{2})$. Here $X$ takes the values $(0,0)$, $(0,1)$, $(1,0)$, $(1,1)$ and correspondingly $f(X)$ takes the values $(0,0$),$A$,$A+B$,$B$.To keep this as a bijection, $A\neq (0,0)$, $B \neq (0.0)$, when $A$ takes the value $(0,1)$, $B$ must take the values $(1,0)$ and $(1,1)$, when $A$ takes the value $(1,0)$, $B$ must take the values $(0,1)$ and $(1,1)$, and when $A$ takes the value $(1,1)$, $B$ must take the values $(1,0)$ and $(1,1)$. Thus we have 6 different quadratic permutation polynomials f(X) and after adding the constants, we will get $24$ quadratic permutation polynomials. The polynomials $f(X)=(1,0)X^{2}+(1,1)X$ and $f(X)=(1,0)X^{2}+(0,1)X$ are complete permutation polynomials. After adding the constants, we will get $8$ complete quadratic permutation polynomials. 
\end{proof}
\begin{proposition}
If $n>2$, there are no quadratic permutation polynomials over T-quaternion ring $\mathcal{S}_{\mathbb{Z}_{n}}$.
\end{proposition}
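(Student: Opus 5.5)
Following the proof of Theorem \ref{rd25}, I would reduce the problem to a unit condition on a linear expression. For $f(X)=AX^{2}+BX+C$ over $\mathcal{S}_{\mathbb{Z}_{n}}$, commutativity of the T-quaternion ring gives
$$f(X_{1})-f(X_{2})=(X_{1}-X_{2})\bigl(A(X_{1}+X_{2})+B\bigr).$$
If $A(X_{1}+X_{2})+B$ is a non-unit for some $X_{1}\neq X_{2}$, then it has a nonzero annihilator $D$ (the ring is finite). Choosing $X_{1}-X_{2}=D$ should then produce a collision. As in Proposition \ref{rd26}, the constant $C$ can be dropped. So the plan is to show that for $n>2$ and every $A\neq 0$ there are $X_{1}\neq X_{2}$ with $A(X_{1}+X_{2})+B\notin U(\mathcal{S}_{\mathbb{Z}_{n}})$ and $(X_{1}-X_{2})\bigl(A(X_{1}+X_{2})+B\bigr)=0$. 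In the ordered-pair notation, $(w,a)$ is a unit iff $w^{2}+3a^{2}\in U(\mathbb{Z}_{n})$.

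First I would treat the case where some prime $p>2$ divides $n$, working with the images in $\mathcal{S}_{\mathbb{Z}_{p}}$. By Theorem \ref{rd22}, this ring is either a field of odd characteristic or (when $p-3$ is a square) splits into a product of copies of $\mathbb{Z}_{p}$. Over such a ring, $X\mapsto \bar A X^{2}+\bar B X$ fails to be injective when $\bar A\neq 0$ and $2$ is invertible: $X$ and $-\bar A^{-1}\bar B-X$ (or the componentwise analogue on the factor where $\bar A$ is nonzero) collide. I would then lift this collision from $\mathbb{Z}_{p}$ to $\mathbb{Z}_{n}$ by the Chinese remainder theorem together with Theorem \ref{rd24}. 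The lifted $X_{1},X_{2}$ should differ by a zero divisor that kills $A(X_{1}+X_{2})+B$.

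Second, I would handle $n=2^{t}$ with $t\geq 2$ directly, using the equations in the style of Theorem \ref{rd25}. Setting $X_{2}=0$ and letting $X_{1}$ run over $(1,0),(0,1),(1,1),(2,0),\dots$ forces $B$, $A+B$, $2A+B,\dots$ all to be units. Subtracting pairs forces $A$, $2A$, and similar multiples into the non-units. I would then exhibit an explicit pair with $X_{1}-X_{2}=(2^{t-1},0)$ or $(2^{t-1},2^{t-1})$ and show their images agree.

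The step I expect to be the main obstacle is exactly this last one, and more generally any case where $\bar A=0$ modulo every prime, i.e.\ $A$ is nilpotent. There the paper's own Theorem \ref{rd25} shows that nonzero zero-divisor $A$ with unit $B$ \emph{do} give permutation polynomials over $\mathbb{Z}_{p^{e}}$. So I would first try to check whether the T-quaternion structure (the term $-3a^{2}$ in the product) destroys this. If it does not, the claim would have to be restricted to $A\in U(\mathcal{S}_{\mathbb{Z}_{n}})$ or to squarefree $n$.
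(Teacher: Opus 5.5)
\textbf{The proposition is false, so your plan cannot be completed.} Your worry in the last paragraph is justified: the T-quaternion structure does not destroy the nilpotent examples.

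Suppose $N\in\mathcal{S}_{\mathbb{Z}_n}$ is nonzero and nilpotent, and put $f(X)=NX^2+X$. Then $f(X_1)-f(X_2)=(X_1-X_2)\bigl(1+N(X_1+X_2)\bigr)$. Since $1+N(X_1+X_2)$ is one plus a nilpotent, it is a unit. So $f$ is a quadratic permutation polynomial.

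Concretely, $(i+j+k)^2=-3$, so $(w_1,a_1)(w_2,a_2)=(w_1w_2-3a_1a_2,\,w_1a_2+w_2a_1)$. For $n=4$, the polynomial $2X^2+X$ is the map $(w,a)\mapsto(3w+2a^2,\,a)$. For $n=3$, where $(i+j+k)^2=0$, the polynomial $(i+j+k)X^2+X$ is the map $(w,a)\mapsto(w,\,a+w^2)$. Both are bijections.

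Nonzero nilpotents exist in the following cases:
\begin{itemize}
\item whenever $p^2\mid n$: take $N=n/p$;
\item whenever $3\mid n$: take $N=\tfrac n3(i+j+k)$;
\item whenever $2\mid n$: take $N=\tfrac n2(1+i+j+k)$, using $(1+i+j+k)^2=2(i+j+k-1)$.
\end{itemize}
The same argument shows that the quadratic examples of Theorem~\ref{rd25} remain permutation polynomials over $\mathcal{S}_{\mathbb{Z}_{p^e}}$. Hence, for $n>2$, the statement holds exactly when $n$ is squarefree with $\gcd(n,6)=1$. In particular your Step 2 for $n=2^t$ cannot succeed.

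Do not try to reconcile with the paper's argument either. It claims that $X=(0,\tfrac n2\pm1)$, resp.\ $(0,\tfrac{n\pm1}2)$, collide, but it drops the linear term. In fact $f\bigl((0,a)\bigr)=-3a^2A+a(i+j+k)B$. So the two images differ by $2(i+j+k)B$, resp.\ $(i+j+k)B$, which is nonzero in general (e.g.\ $n=5$, $B=1$).

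\textbf{Step 1 also has two concrete failures as written.}

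First, for $p=3$ the dichotomy you read off Theorem~\ref{rd22} is wrong; that classification is only valid for $p>3$. In $\mathcal{S}_{\mathbb{Z}_3}$ we have $(i+j+k)^2=0$, so the ring is $\mathbb{Z}_3[\varepsilon]/(\varepsilon^2)$. It is neither a field nor split, and $\bar A=\varepsilon$ produces no collision.

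Second, a collision in $\mathcal{S}_{\mathbb{Z}_p}$ lifts by the Chinese remainder theorem only when $p^2\nmid n$. Theorem~\ref{rd24} is a conditional count of units and gives no lifting. The robust tool runs the other way: a permutation polynomial of $\mathcal{S}_{\mathbb{Z}_n}$ induces a surjective, hence bijective, map on every quotient ring.

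\textbf{What survives.} Under the corrected hypothesis ($n$ squarefree, $\gcd(n,6)=1$), your Step 1 is a complete proof. In that case $\mathcal{S}_{\mathbb{Z}_n}\cong\prod_{p\mid n}\mathcal{S}_{\mathbb{Z}_p}$ is a product of fields of odd characteristic. Take a factor where $A$ is nonzero, and collide $x$ with $-\bar A^{-1}\bar B-x$ there while keeping all other coordinates equal.

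Of your two proposed repairs:
\begin{itemize}
\item Restricting to $A\in U(\mathcal{S}_{\mathbb{Z}_n})$ is correct: pass to a residue field of odd characteristic, or, when $n$ is even, to $\mathcal{S}_{\mathbb{Z}_2}$ and check directly.
\item Restricting to squarefree $n$ is not enough, as $n=3$ and $n=6$ show.
\end{itemize}
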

\begin{proof}
From the proof of the proposition \ref{rd26}, we have $f(X)=(w_{1}(w^{2}-3a^{2})+w_{2}w, a_{1}(w^{2}-3a^{2})+wa_{2})=(w_{1}(w^{2}+(n-3)a^{2})+w_{2}w, a_{1}(w^{2}+(n-3)a^{2})+wa_{2}).$  
If $n$ is even, when $X=(0,(\frac{n}{2}-1))$ and $X=(0,(\frac{n}{2}+1))$, $f(X)$ will have the same image $(n-3)(\frac{n}{2}-1)^{2}A=(n-3)(\frac{n}{2}+1)^{2}A$ as $(\frac{n}{2}-1)^{2}=(\frac{n}{2}+1)^{2}$ in $\mathbb{Z}_{n}$ as $n$ is even. So, $f(X)$ cannot be a bijection. If $n$ is odd, when $X=(0,(\frac{n-1}{2}))$ and $X=(0,(\frac{n+1}{2}))$, $f(X)$ will have the same image $(n-3)(\frac{n-1}{2})^{2}A=(n-3)(\frac{n+1}{2})^{2}A$ as $(\frac{n-1}{2})^{2}=(\frac{n+1}{2})^{2}$ in $\mathbb{Z}_{n}$ as $n$ is odd. So, $f(X)$ cannot be a bijection. Hence the proposition.
\end{proof}
\begin{proposition} \label{rd27}
If $n=2$, there are $16$ cubic permutation polynomials over T-quaternion ring $\mathcal{S}_{\mathbb{Z}_{n}}$ of which none are complete permutation polynomials. 
\end{proposition}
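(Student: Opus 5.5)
The plan is to imitate the proof of Proposition \ref{rd26}: first describe the ring $\mathcal{S}_{\mathbb{Z}_{2}}$ completely, then enumerate the cubic polynomials. Since $f(X)$ is a permutation polynomial if and only if $f(X)+D$ is one for a constant $D$, I will count the constant-free cubics $f(X)=AX^{3}+BX^{2}+CX$ with $A\neq(0,0)$, and multiply by the $4$ possible constants at the end. The claim of $16$ thus reduces to showing that exactly $4$ triples $(A,B,C)$ work.

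\textbf{Step 1: the ring structure.} Since $(i+j+k)^{2}=-3$, the product in the ordered-pair notation is $(w,a)(w',a')=(ww'-3aa',\,wa'+aw')$. Over $\mathbb{Z}_{2}$ this becomes $(ww'+aa',\,wa'+aw')$. Hence $(1,0)$ is the identity, $(0,1)^{2}=(1,0)$, and $\varepsilon=(1,1)$ satisfies $\varepsilon^{2}=(0,0)$. So $\mathcal{S}_{\mathbb{Z}_{2}}\cong\mathbb{Z}_{2}[\varepsilon]/(\varepsilon^{2})$, with units $(1,0),(0,1)$ and non-units $(0,0),(1,1)$, in agreement with $U(\mathcal{S}_{\mathbf{R}})$ described via $w^{2}+3a^{2}$.

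\textbf{Step 2: the power maps.} Next I tabulate the power maps. The map $X\mapsto X^{2}$ sends $(0,0),(1,1)\mapsto(0,0)$ and $(1,0),(0,1)\mapsto(1,0)$. The map $X\mapsto X^{3}$ fixes $(0,0),(1,0),(0,1)$ and sends $(1,1)\mapsto(0,0)$. Therefore
\[
f(0,0)=(0,0),\quad f(1,1)=C\varepsilon,\quad f(1,0)=A+B+C,\quad f(0,1)=A(0,1)+B+C(0,1).
\]

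\textbf{Step 3: conditions for a bijection.} Bijectivity first forces $C\varepsilon\neq(0,0)$. This happens exactly when $C$ is a unit, and then $f(1,1)=\varepsilon$. The remaining two values $f(1,0)$ and $f(0,1)$ must then be the two units in some order. Writing $(A+C)(0,1)+B$ and $A+B+C$ and using that multiplication by $(0,1)$ swaps the coordinates, I will run through the $3\cdot4\cdot2=24$ choices of $A\neq0$, $B$, $C\in U$. The expected outcome is that exactly $4$ triples survive, giving $4\cdot4=16$ permutation polynomials. This finite check, organised by whether $B$ and $A+C$ are units or not, is the only place needing care, and I expect it to be the main (though routine) obstacle.

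\textbf{Step 4: no complete permutation polynomials.} Finally, $f(X)+X$ is again a cubic with linear coefficient $C+(1,0)$. By Step 3 applied to it, $C+(1,0)$ would have to be a unit. But for $C\in\{(1,0),(0,1)\}$ we get $C+(1,0)\in\{(0,0),(1,1)\}$, both non-units. Hence none of the $16$ permutation polynomials is complete.
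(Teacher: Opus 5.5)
Your proposal is correct and follows essentially the paper's route. You drop the constant, evaluate at the four elements, force $f(1,1)=(1,1)$ and $\{f(1,0),f(0,1)\}=\{(1,0),(0,1)\}$, do the finite check (which the paper carries out in its two tables), and multiply by $4$. Your Step 4 also gives an actual argument for non-completeness, which the paper only asserts as ``clear''.

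The check you leave as an ``expected outcome'' closes at once from $f(0,1)=f(1,0)+(A+C)\varepsilon$. This forces $A+C$ to be a unit and $B$ a non-unit, hence $A=(1,1)$, $B\in\{(0,0),(1,1)\}$, $C\in\{(1,0),(0,1)\}$, which are exactly the paper's four triples.
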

\begin{proof}
Let $f(X)=AX^{3}+BX^{2}+CX$ where A,B, and C are in $\mathcal{S}_{\mathbb{Z}_{n}}$. According to the new notation, let us take $X=(w,a),A=(w_{1}$, $a_{1})$, $B=(w_{2},a_{2})$, and $C=(w_{3},a_{3})$. Then $f(X)=(w_{1}w(w^{2}-9a^{2})-9aa_{1}(w^{2}-a^{2})+w_{2}(w^{2}-3a^{2})-6a_{2}wa+w_{3}w-3a_{3}a, \\ \quad 3w_{1}a(w^{2}-a^{2}+a_{1}w(w^{2}-9a^{2})+2w_{2}wa+a_{2}(w^{2}-3a^{2})+w_{3}a+a_{3}w)$. \\ Since $n=2$ , $f(X)$ can be simplified as $$f(X)=((w^{2}+a^{2})(w_{1}w+aa_{1}+w_{2})+w_{3}q+a_{3}w, \quad (w^{2}+a^{2})(w_{1}a+wa_{1}+a_{2})+w_{3}q+a_{3}w).$$ Thus we have following elements and images. \\
\begin{tabular}{|c|c|c|}
  \hline
  $X$ & $f(X)$ & Possible values 
                              for a bijection \\ \hline
  $(0,0)$ & $(0,0)$ &only $(0,0)$ \\ \hline
 $(0,1)$ & $(a_{1}+w_{2}+a_{3}, \quad w_{1}+a_{2}+w_{3})$ & $(0,1)$ or $(1,0$)\\ \hline
 $(1,0)$ & $(w_{1}+w_{2}+w_{3}, \quad a_{1}+a_{2}+a_{3})$ &  $(0,1)$ or $(1,0$)\\ \hline
  $(1,1)$ & $(w_{3}+a_{3}, \quad w_{3}+a_{3})$  & only $(1,1)$\\
  \hline 
\end{tabular}
\vspace{0.5cm}

\noindent
Case 1: Let $(a_{1}+w_{2}+a_{3}, \quad w_{1}+a_{2}+w_{3})=(0,1)$. Then $(w_{1}+w_{2}+w_{3}, \quad a_{1}+a_{2}+a_{3})=(1,0)$ and $w_{3}+a_{3}=1$. Since $A \neq (0,0)$, the possible values of $(w_{1},a_{1})$ are  $(0,1)$,  $(1,0)$, and $(1,1)$. The following tables shows different solutions of the unknowns.
\begin{tabular}{|c|c|c|c|}
  \hline
 $(w_{1},a_{1})$ & $(w_{1},a_{1})$ & $(w_{1},a_{1})$ & Whether all equations are satisfied or not \\ \hline
  $(0,1)$ & $(0,0)$,$(1,1)$ & $(0,1)$,$(1,0)$  & Both solutions fail \\
 & & respectively& $w_{1}+a_{2}+w_{3}=0\neq 1$ \\ \hline
  $(1,0)$ &$(0,0)$,$(1,1)$ & $(0,0)$,$(0,1)$  &  First fails $w_{3}+a_{3}=0\neq 1$ and the \\ 
  & &respectively & second fails $w_{1}+a_{2}+w_{3}=0\neq 1$\\ \hline
 $(1,1)$ & $(0,0)$,$(1,1)$ & $(0,1)$,$(1,0)$  & All equations are satisfied \\
 & & respectively &  \\ \\ \hline
\end{tabular}

\vspace{0.5cm}
\noindent
Case 2: Let $(a_{1}+w_{2}+a_{3}, \quad w_{1}+a_{2}+w_{3})=(1,0)$. Then $(w_{1}+w_{2}+w_{3}, \quad a_{1}+a_{2}+a_{3})=(0,1)$ and $w_{3}+a_{3}=1$. The following tables shows different solutions of the unknowns.
\begin{tabular}{|c|c|c|c|}
  \hline
 $(w_{1},a_{1})$ & $(w_{1},a_{1})$ & $(w_{1},a_{1})$ & Whether all equations are satisfied or not \\ \hline
  $(0,1)$ & $(0,0)$,$(1,1)$ & $(1,0)$,$(0,1)$  & Both solutions fail \\
 & & respectively& $w_{1}+a_{2}+w_{3}=1\neq 0$ \\ \hline
  $(1,0)$ &$(0,0)$,$(1,1)$ & $(1,1)$,$(0,1)$  &  First fails $w_{3}+a_{3}=0\neq 1$ and the \\ 
  & &respectively & second fails $w_{1}+a_{2}+w_{3}=1\neq 0$\\ \hline
 $(1,1)$ & $(0,0)$,$(1,1)$ & $(1,0)$,$(0,1)$  & All equations are satisfied \\
 & & respectively &  \\  \hline
\end{tabular}
\vspace{0.5cm}

\noindent
Thus we have the following permutation polynomials over T-quaternion ring $\mathcal{S}_{\mathbb{Z}_{n}}$ : \\
1. $f(X)=(1,1)X^{3}+(0,0)X^{2}+(0,1)X$  \\
2. $f(X)=(1,1)X^{3}+(1,1)X^{2}+(1,0)X$ \\
3. $f(X)=(1,1)X^{3}+(0,0)X^{2}+(1,0)X$  \\
4. $f(X)=(1,1)X^{3}+(1,1)X^{2}+(0,1)X$ \\
By adding the constants we get $16$ quadratic permutation polynomials over T-quaternion ring $\mathcal{S}_{\mathbb{Z}_{n}}$ of which clearly none are complete permutation polynomials. 
\end{proof}
\noindent
It has been observed that there are a large number of cubic permutation and complete permutation polynomials  over T-quaternion ring $\mathcal{S}_{\mathbb{Z}_{n}}$, where $n>2$. Evidently they are proportional to $n^{2}$. \\
\textbf{Open Question}: Find a formula or a relation between cubic permutation  and complete permutation polynomials  over T-quaternion ring $\mathcal{S}_{\mathbb{Z}_{n}}$ for $n>2$. 

\section{Conclusion}
Permutation polynomials over finite fields have a wide range of applications in Cryptography. Extending permutation polynomials from finite fields to rings is a challenging task, as rings introduce additional structural complexities, such as zero divisors, that must be carefully addressed. Once these challenges are overcome, permutation polynomials over rings may support cryptographic applications that are more secure, robust, and reliable.
\section{Appendix}

Programme-1 \\
The programme in C++ given below will find the permutation polynomials up to degree 3. For higher values of $n$ more RAM is required. 
\begin{verbatim}
#include<iostream>
#include<cmath>
using namespace std;
int main() {
	cout << "To find up to third order permutation polynomials" << endl;
	long long int A, B, C, D, X, Y[2000], n,c=0,c1=0,c2=0,c3=0,i,k, new1 = 0;
	cout << "Enter n =" << endl;
	cin >> n;
	for (A = 0; A < n; A++) {
		for (B = 0; B < n; B++) {
			for (C = 0; C < n; ++C) {
				for (D = 0; D < n; D++) {
					for (X = 0; X < n; X++) {

						Y[X] = (A * X * X * X + B * X * X + C * X + D) % n;
					}
					for (i = 0; i < n; i++)
					{
						for (k = i + 1; k < n; k++)
						{
							if (Y[i] == Y[k])
							{
								new1 = 1; break;
							}
						}	if (new1 == 1) { break; }
					}
					if (new1 == 1)
					{
						new1 = 0;
					}
					else {
						for (X = 0; X < n; X++)
						{
				//cout << "X=" << X << " " << "Y=" << Y[X]
				//<< " " << "A=" << A << " " << "B=" << B << " " << "C=" << C << " " << "D=" <<D<< endl;

						}
      cout << "The PP is " << A << "X^3 + " << B << "X^2 + " << C << "X + " << D << endl; c++;
						if (C != 0 && B == 0 && A == 0) { c1++; }
						if (B != 0 && A == 0) { c2++; }
						if (A != 0) { c3++; }
					}

				}
			}

		}
	} cout << "Total number of PPs of degree one = " << c1 << endl; 
	cout << "Total number of PPs of degree two = " << c2 << endl; 
	cout << "Total number of PPs of degree three = " << c3 << endl; 
	cout << "Total PP up to third degree= " << c << endl;
		return 0;
}
\end{verbatim}
Programme-2 \\
\begin{verbatim}
#include<iostream>
#include<cmath>
using namespace std;
int main() {
	std::cout << "To find twin units for complete permutation polnomials." << endl;
	long long int w, a, C, D, X, Y[2000], n, c = 0, d,e,c1 = 0, c2 = 0, c3 = 0,p1,p2;
	std:: cout << "Enter n =" << endl;
	cin >> n;
	for (w = 0; w < n; w++) {
		for (a = 0; a < n; a++) {
			c1 = (w * w + (3 * a * a)) % n; 
         //cout << "c1= " << c1<<"w= "<< w << "," << "a= " << a <<endl;
			for (d = 1; d < n; d++) {
				p1 = (d * c1) % n;
				if (p1 == 1) {
					c2 = ((w + 1) * (w + 1) + (3 * a * a)) % n; 
                //cout << "c2= " << c2 <<"w= " << w << "," << "a= " << a << endl; 
					{
						for (e = 1; e < n; e++) {
							p2 = (e * c2)%n; if (p2 == 1) { 
               std::cout << "w= " << w << "," << "a= " << a << endl; c1 = 0; c2 = 0; c3++; }
						}
					}
				}
			}
		}
	} std::cout << "Number of twin units= " << c3 << endl; return 0;}
\end{verbatim}

\end{document}